\newtheorem{teor}{Theorem}
\newtheorem{lema}{Lemma}
\newtheorem{obs}{Remark}
\newcommand{\vel}{\mathbf u}
\newcommand{\veld}{\mathbf u^\delta}
\newcommand{\R}{\mathbb R}
\newcommand{\x}{\mathbf x}
\newcommand{\y}{\mathbf y}
\newcommand{\z}{\mathbf z}
\newcommand{\dosnorma}[1]{\|#1\|_{L^ 2(\mathbb R^2)}}
\newcommand{\xnorma}[2]{\|#1\|_{L^ {#2}(\mathbb R^2)}}
\newcommand{\pnorma}[1]{\|#1\|_{L^ p(\mathbb R^2)}}
\newcommand{\inorma}[1]{\|#1\|_{L^ \infty(\mathbb R^2)}}
\newcommand{\llseminorma}[1]{\langle#1\rangle_{LL}}
\newcommand{\llnorma}[1]{\|#1\|_{LL}}
\newcommand{\diver}{\mathrm {div\ }}
\newcommand{\essinf}{\mathrm {ess inf\ }}
\newcommand{\supintnormard}[3]{\sup\limits_{0\le \tau\le #1}\int |#2|^2d\x+\int_0^{#1}\int \rho|#3|^2d\x d\tau}
\newcommand{\supintnormardsobconv}[4]{\sup\limits_{0\le \tau\le #1}\int \tau^{#4}\rho|#2|^2d\x+\int_0^{#1}\int \tau^{#4}|#3|^2d\x d\tau}
\providecommand{\keywords}[1]{\textbf{\textit{Keywords:}} #1}
\begin{document}
\title{Lagrangian structure for two dimensional non-barotropic compressible fluids\thanks{Financial support by Fapesp, grant 2009/15515-0.}}
\author{Pedro Nel Maluendas Pardo\\
School of Mathematics and Statistics\\ 
UPTC--Universidad Pedag\'ogica y Tecnol\'ogica de Colombia\\
 Avenida Central del Norte 39-115. 150003\\ 
Tunja, Boyac\'a, Colombia\\ 
pedro.maluendas@uptc.edu.co\\
\\ 
Marcelo M. Santos\\
IMECC--Institute of Mathematics, Statistics and Scientific Computing\\ 
UNICAMP--Universidade Estadual de Campinas.\\ 
Rua S\'ergio Buarque de Holanda, 651. 13083-859\\ 
Campinas, SP, Brazil\\ 
msantos@ime.unicamp.br}

\date{}

\maketitle

\begin{abstract}
We study the \emph{lagrangian structure} for weak solutions of two dimensional Navier-Stokes equations for a non-barotropic compressible fluid, i.e. we show the uniqueness of particle trajectories for two dimensional compressible fluids including the energy equation.
Our result extends partially the previous result obtained for barotropic fluids by D. Hoff and M. M. Santos \cite{hoff-santos}.
\end{abstract}
%
\keywords{Lagrangian structure, log-lipschitzian vector fields, compressible fluid, non-barotropic.}


\maketitle

\section{Introduction}

\label{introduction}

We consider the model equations for the non-barotropic (and polytropic) compressible fluids
\begin{align}
\label{masaeq}&\rho_t + \diver\left( \rho \vel \right)=0\\
\label{momentoeq}&\left(\rho\vel \right)_t + \diver \left( \rho \vel \otimes \vel \right) + \nabla P = \mu \triangle \vel + \lambda \nabla\left( \diver \vel \right)\\
\label{energiaeq}
&\left( \rho e  \right)_t + \diver \left(\rho e \vel \right)
= K \triangle e - P \diver \vel + \mu \left(|\nabla \vel |^2 
+ u^k_{x_j}u^j_{x_k} \right) 
+ \left( \lambda - \mu  \right)\left( \diver \vel  \right)^2,
\end{align}
with $t>0$ and $x=(x_1,x_2)\in\mathbb{R}^2$, where repeated indexes mean summation from 
$1$ to $2$, and $\rho$, $\vel=(u^1,u^2)$, $e$, $P$ and $\mu, \lambda>0$ (constants) denote, respectively, the density, velocity, specific internal energy, pressure and viscosities of the fluid. We assume that the fluid is ideal, i.e. $P(\rho,e)=(\gamma-1)\rho e$, where $\gamma>1$ is a constant (the adiabatic constant).  The symbol $K$ denotes some positive constant related to the heat flow. These three equations describe, respectively, the conservation of mass, the conservation of momentum and the balance of energy (see e.g. \cite{feireisl}, \cite{batchelor} or \cite{anderson}).

In terms of the convective derivative \ $\mathbf{\dot{}}  := \partial_t + \vel \cdot \nabla$, \ assuming that $(\rho,\vel,e)$ is sufficiently regular and using the equation \eqref{masaeq} in the the equations \eqref{momentoeq} and
\eqref{energiaeq}, we can write the system \eqref{masaeq}-\eqref{energiaeq} as
\begin{align}
\label{masaeqc}&\dot \rho = -\rho \diver \vel\\
\label{momentoeqc}&\rho \dot \vel = -\nabla P + \mu\triangle \vel + \lambda \nabla(\diver \vel)\\
\label{energiaeqc}&\rho \dot e = K \triangle e - P \diver \vel + \mu \left(|\nabla
\vel |^2 + u^k_{x_j}u^j_{x_k} \right) +
    \left( \lambda - \mu  \right)\left( \diver \vel  \right)^2.
\end{align}
To the system \eqref{masaeq}-\eqref{energiaeq} we add the initial conditions
\begin{equation}
\label{ci}
\rho(0,\cdot) = \rho_0, \ \ \vel(0,\cdot) = \vel_0, \ \ e(0,\cdot)=e_0,
\end{equation}
which can be discontinuous functions. We shall assume that $\vel_0$ belongs to the Sobolev space $H^1(\R^2)$, and, for constants $\tilde\rho, \tilde e$, and $l>0$, that the initial {\lq\lq}energy{\rq\rq} 
\begin{equation}\label{E0}
\begin{array}{lrl}
&C_0:=&\displaystyle{\inorma{\rho_0-\tilde\rho}^2+\|\vel_0\|_{H^1(\R^2)}^2}\\
&
&+\displaystyle{\int\left[(\rho_0-\tilde\rho)^2+|\vel_0|^2+|e_0-\tilde
e|^2+|\nabla e_0|^2\right](1+|\x|^2)^ld\x},
\end{array}
\end{equation}
is sufficiently small. 

Our goal in this paper is to show the uniqueness of particle paths
(trajectories of the velocity field $\vel$) of a weak solution $(\rho,\vel,e)$
of the system \eqref{masaeq}-\eqref{energiaeq} together with initial condition
\eqref{ci}, following the plan of \cite{hoff-santos} for barotropic fluids.
Briefly, a key idea in \cite{hoff-santos} is to write $\vel=\vel_{P}+\vel_{F,\omega}$, where $\vel_{P}$ is a vector field associated with the pressure $P$ and $\vel_{F,\omega}$ is associated with the vorticity $\omega$ and the so called {\em effective viscous flux}, i.e. the quantity $F := (\mu + \lambda)\mbox{div}\vel - (P-\tilde P)$, where $\tilde P:=P(\tilde\rho)$. By energy estimates, some properties of regularity of the solution and some estimates on the convective derivative of $\vel$ and using classical arguments of elliptic equations, it is shown that $\vel_{P}$ is a log-lipschitzian vector field in space, for each positive time. In addition, the log-lipschitzian seminorm of $\vel_P$ is locally bounded with respect to time.   On the other hand, by classical Sobolev estimates and also by some estimates on the convective derivative of $\vel$, it is possible to show that the vector field $\vel_{F,\omega}$ is lipschitzian in space, also for each positive time. Then, assuming that the initial velocity is in the Sobolev space $H^s(\mathbb{R}^2)$, for some arbitrary $s>0$, it is shown that the lipschitzian seminorm of $\vel_{F,\omega}$ is locally integrable with respect to time. This is perhaps the most difficult part. Putting together the results for $\vel_P$ and $\vel_{F,\omega}$, one has that the log-lipschitzian seminorm of the velocity field $\vel$ is locally integrable with respect to time. Therefore, the uniqueness of particle paths follows from Osgood's lemma.    
%
 We shall show that this procedure is applicable to the non-barotropic case \eqref{masaeq}-\eqref{energiaeq}, under the hypothesis that the initial velocity $\vel_0$ is in the Sobolev space $H^1$ with sufficiently small norm $\|\vel_0\|_{H^1}$. The nonlinearity $P(\rho,e)$ turns the problem  very difficult. We shall use the solution to the system \eqref{masaeq}-\eqref{energiaeq} obtained by \cite{hoff-heatconducting}. 

More precisely, in this paper we show that under the above additional hypothesis ($\|\vel_0\|_{H^1(\mathbb{R}^2)}<<1$), a similar theorem to Theorem 2.5 of \cite{hoff-santos} holds true for the equations \eqref{masaeq}-\eqref{energiaeq}, i.e. we shall prove the following result:

\begin{teor}
\label{lagrangiana}
Let $(\rho,\vel,e)$ be a weak solution of the system \eqref{masaeq}-\eqref{energiaeq} and initial conditions \eqref{ci}, as in \cite[Theorem 1.1]{hoff-heatconducting}. There is a positive number $\varepsilon$ such that if $E_0<\varepsilon$ then  
\begin{enumerate}
\item for each $\x_0\in \R^2$ there exits a unique map $X(\cdot, \x_0)\in
C([0,\infty];\R^2)\cap C^1((0,\infty);\R^2)$ satisfying
\begin{equation}\label{trajetoria}
X(t,\x_0)=\x_0+\int_0^t\vel(X(\tau,\x_0),\tau)d\tau, \quad\forall t\ge0;
\end{equation}
\item \label{homeomorfismo} for each $t>0$, the flux map $\x\in\mathbb{R}^2\mapsto X(t,\x)\in\mathbb{R}^2$ is a homeomorphism; 
\item \label{holdercompactos}for each compact set $K$ in $\R^2$ and any $0\le t_1<t_2<\infty$, the map $X(t_1,x)\mapsto X(t_2,x)$, $x\in K$, is bijective and H\"older 
continuous;
\item \label{curves H cont} for each $t>0$, the map $x\in\R^2\mapsto X(t,x)$ takes H\"older continuous curves into H\"older continuous curves, i.e. if  $\mathcal C$ 
is a curve of class $C^\alpha$ in $\R^2$, for some $\alpha\in [0,1)$ 
then $X(t,\mathcal C)$ is a curve of classe $C^{\alpha e^{-Lt}}$, where $L$ is a positive constant depending on 
$\underline \rho$ and $s$.
\end{enumerate}
\end{teor}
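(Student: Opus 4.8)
The plan is to follow the strategy of Hoff--Santos \cite{hoff-santos} for the barotropic case, adapting each step to accommodate the energy equation \eqref{energiaeqc} and the more complicated pressure law $P=(\gamma-1)\rho e$. The starting point is the decomposition $\vel=\vel_P+\vel_{F,\omega}$, where $F:=(\mu+\lambda)\diver\vel-(P-\tilde P)$ is the effective viscous flux and $\omega:=\curl\vel$ the vorticity; concretely $\vel_P$ solves $\triangle\vel_P=\nabla\bigl((P-\tilde P)/(\mu+\lambda)\bigr)$ (so $\vel_P=\nabla(-\triangle)^{-1}\nabla\cdot(\cdots)$, a Riesz-type potential of $P-\tilde P$) and $\vel_{F,\omega}$ carries the $F$ and $\omega$ contributions. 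First I would collect from \cite{hoff-heatconducting} the a priori bounds on the weak solution that are available when $E_0$ is small: the pointwise bound on $\rho$ away from $0$ and $\infty$, the decay and integrability in time of $\|\nabla\vel\|_{L^2}$, $\|F\|_{H^1}$, $\|\omega\|_{H^1}$, $\|\sqrt\rho\,\dot\vel\|_{L^2}$, and the analogous weighted-in-time estimates for higher norms, together with the corresponding estimates for $e-\tilde e$ and $\nabla e$ coming from the energy equation. These are the raw material for everything that follows.

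The next step is to show $\vel_P$ is log-Lipschitz in space with locally-in-time bounded seminorm $\llseminorma{\vel_P(\cdot,t)}$. Since $\vel_P$ is (up to constants) a singular integral operator of Calderón--Zygmund type applied to $P-\tilde P$, the standard elliptic/potential estimate gives $\llseminorma{\vel_P(\cdot,t)}\lesssim \inorma{P-\tilde P}+\|\nabla(P-\tilde P)\|_{L^2\cap L^\infty}$ or similar; here is where the non-barotropic law matters, because $P-\tilde P=(\gamma-1)(\rho e-\tilde\rho\tilde e)$ must be controlled in $L^\infty$ and its gradient $\nabla P=(\gamma-1)(e\nabla\rho+\rho\nabla e)$ in the relevant norms, which forces us to combine the density estimates with the new bounds on $e$ and $\nabla e$. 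Then for $\vel_{F,\omega}$ one uses that $\nabla\vel_{F,\omega}$ is a Calderón--Zygmund transform of $(F,\omega)$, so by Sobolev embedding in $\R^2$ the Lipschitz seminorm is bounded by $\|\nabla F\|_{L^2\cap L^\infty}+\|\nabla\omega\|_{L^2\cap L^\infty}$ up to lower-order terms, and then one shows this is locally integrable in $t$ by interpolating the time-weighted energy estimates (this uses estimates on $\dot\vel$ and, through the coupling, on $\dot e$). Adding the two contributions yields that $\llseminorma{\vel(\cdot,t)}$ is locally integrable in $t$ on $(0,\infty)$, and an application of Osgood's lemma gives part (1): existence and uniqueness of the trajectory $X(\cdot,\x_0)$ and its regularity in $t$ (the $C^1$ in time coming from continuity of $\vel$ along trajectories, which follows once $X$ is known to exist). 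Parts (2)--(4) then follow from the Osgood modulus of continuity in the standard way: the flow map $\x\mapsto X(t,\x)$ and its inverse are continuous because the ODE has unique solutions depending continuously on data with a log-Lipschitz field, giving the homeomorphism in (2); Gronwall-type estimates with the $\int\llseminorma{\vel}\,d\tau$ in the exponent give the Hölder exponent $\alpha e^{-Lt}$ and hence (3) and (4), with $L$ essentially $\esssup_t\llseminorma{\vel(\cdot,t)}$ on the relevant time interval (or its integrable bound), depending on $\underline\rho$ and $s$ through the a priori constants.

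The main obstacle I expect is establishing the local-in-time integrability of the Lipschitz seminorm of $\vel_{F,\omega}$, i.e. controlling $\int_0^T\bigl(\|\nabla F(\cdot,\tau)\|_{L^\infty}+\|\nabla\omega(\cdot,\tau)\|_{L^\infty}\bigr)d\tau$. In the barotropic case this is already described by the authors as ``perhaps the most difficult part'', and here it is worse: the time-weighted higher-order estimates now involve the energy equation, so one must propagate regularity of $e$ (the terms $K\triangle e$, $P\diver\vel$, and the quadratic viscous dissipation $\mu(|\nabla\vel|^2+u^k_{x_j}u^j_{x_k})+(\lambda-\mu)(\diver\vel)^2$ all enter), and the coupling between $\nabla e$ and $\nabla\vel$ through $\nabla P$ must be closed carefully. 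The smallness of $E_0$ (in particular $\|\vel_0\|_{H^1}\ll1$) is exactly what is needed to absorb the nonlinear feedback and to keep the a priori constants from \cite{hoff-heatconducting} usable; keeping track of which norms of $e$ one actually has, and with which time weights, so that a Sobolev embedding in $\R^2$ plus interpolation yields an $L^1_t$ bound on $\|\nabla F\|_{L^\infty}$ and $\|\nabla\omega\|_{L^\infty}$, is the crux of the argument. The secondary difficulty is purely bookkeeping: verifying that the weighted estimate \eqref{E0} (with the spatial weight $(1+|\x|^2)^l$) is propagated so that all the singular-integral estimates, which need decay at spatial infinity, actually apply.
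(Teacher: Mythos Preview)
Your outline matches the paper's approach closely: the same decomposition $\vel=\vel_P+\vel_{F,\omega}$, log-Lipschitz control of $\vel_P$, Lipschitz control of $\vel_{F,\omega}$ with time-integrable seminorm, Osgood's lemma for uniqueness, and the standard consequences for (2)--(4). You also correctly locate the main difficulty in the time-integrability of the Lipschitz seminorm of $\vel_{F,\omega}$ and the need for improved convective estimates (the paper's Theorem~\ref{teo3}) that couple $\dot\vel$ and $\dot e$; the paper carries all of this out on the smooth approximations $(\rho^\delta,\vel^\delta,e^\delta)$ and passes to the limit.

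Two technical points in your sketch need correction. First, for $\vel_P$: you propose controlling $\nabla P=(\gamma-1)(e\nabla\rho+\rho\nabla e)$, but $\nabla\rho$ is \emph{not} available here (the density may be discontinuous). The paper avoids this entirely: since $\vel_P=(P-\tilde P)*\nabla\Gamma$, the potential estimate of Lemma~\ref{solfundamental} gives $\llnorma{\vel_P}\le C(\xnorma{P-\tilde P}{p}+\inorma{P-\tilde P})$ for any $p<2$, with no gradient of $P$ required; the time-integrability of the right-hand side then reduces to $\int_0^1\inorma{e-\tilde e}\,dt<\infty$, quoted from \cite{hoff-heatconducting}. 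Second, for $\vel_{F,\omega}$: aiming for $\|\nabla F(\cdot,t)\|_{L^\infty}+\|\nabla\omega(\cdot,t)\|_{L^\infty}$ in $L^1_t$ asks for too much. The paper instead bounds $\inorma{\nabla\vel_{F,\omega}}$ by $\dosnorma{\nabla\vel_{F,\omega}}+\pnorma{D^2\vel_{F,\omega}}$ for a fixed $p>2$ \emph{close to $2$}, uses $\pnorma{D^2\vel_{F,\omega}}\le C\pnorma{\rho\dot\vel}$ from \eqref{d2 ufw}, and interpolates $\pnorma{\dot\vel}\le C\dosnorma{\dot\vel}^{1-\eta}\dosnorma{\nabla\dot\vel}^{\eta}$ with $\eta=(p-2)/p$ small. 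The improved estimates \eqref{conv1vel}--\eqref{conv2vel} then give a factor $t^{-\eta/2}$ in the time integral, which is integrable precisely because $\eta<1$; taking $p$ large (or the $L^\infty$ endpoint) would destroy this.
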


To the best of our knowledge, up to now only a few results are established regarding the lagrangian structure for compressible fluids. For barotropic fluids we can mention the following papers which prove the lagrangian structure: \cite{hoff-2dim}, in dimension two with the initial velocity in the Sobolev space $H^s$, for an arbitrary $s>0$, and with a piecewise H\"older continuous initial density across a single $C^{1,\alpha}$ curve; \ \cite{hoff-santos}, in dimension two and three, with the initial velocity in $H^s$, with $s>0$ in dimension two and $s>1/2$ in dimension three; \  \cite{zhang}, in dimension two, with a viscosity coefficient depending on the fluid density and the initial velocity in $H^1(\mathbb{R}^2)$; \ \cite{glz}, for spherically symmetric fluids, in dimension two and three, with both viscosity coefficients depending on the density; \ \cite{teixeira}, in the half-space in dimension three with the Navier boundary condition and the initial velocity in $H^1$. 

For the non-baratropic fluids, i.e. when the pressure depends also on the energy (in addition to the dependency on the density) extra difficulties appear to obtain the need estimates to show the lagrangian structure. It is necessary to conveniently extend some estimates presented in \cite{hoff-heatconducting}. We use techniques showed in \cite{hoff-santos} and \cite{zhouping} to get estimates in $L^2$ spaces for the material derivatives of speed and internal energy.


%


\smallskip

For convenience, let us state the mains properties we shall use here of the solution to the system \eqref{masaeq}-\eqref{energiaeq} obtained by \cite{hoff-heatconducting}: 


\begin{teor}\label{solucaofraca} \cite{hoff-heatconducting}.
Let $C_0$ be the quantitity defined in \eqref{E0} but without the norm $\|\vel_0\|_{H^1}$, and assume that  $\lambda<(1+\sqrt 2)\mu$. Let positive constants $\overline{\rho}>\tilde\rho>\underline\rho>0$ and $\tilde e>e_1>\underline{e}>0$ be given. Then there are positive constants $C$, $\epsilon$ such that if the initial data $(\rho_0,e_0,\vel_0)$ satisfies $C_0\le \epsilon$ and $\essinf e_0\ge e_1$, then the initial value problem \eqref{masaeq}-\eqref{energiaeq}, \eqref{ci}, has a global weak solution $(\rho,e,\vel)$ with the following properties:
\begin{equation}\label{regularidade}
\begin{array}{c}
\rho-\tilde\rho\in C([0,\infty);H^{-1}(\R^2)), \quad 
\rho(\cdot,t)-\tilde\rho \in (L^2\cap L^\infty)(\R^2))\ t\ge0,\\
\vel, \ e-\tilde e\in C((0,\infty);L^2(\R^2)),\\ 
\rho\in [\underline\rho,\overline\rho] \ a.e., \quad e(\cdot,t)\ge\underline{e} \ a.e.,\\
\vel(\cdot,t), \ F(\cdot,t), \ \omega(\cdot,t), \ e(\cdot,t)-\tilde e\in
H^1(\mathbb{R}^2), \quad t>0
\end{array}
\end{equation}
where $F$ (as already said in the Introduction) denotes the so called {\rm effective viscous flux}, i.e. the quantity $F := (\mu + \lambda)\mbox{div}\,\vel - (P-\tilde P)$, being $\tilde P:=P(\tilde\rho,\tilde e)$, and $\omega$ denotes the vorticity matrix (i.e. $\omega=(\omega^{i,j}), \ \omega^{i,j}=u^i_{x_j}-u^j_{x_i}$);
\begin{equation}\label{energia}
\begin{array}{l}
\sup\limits_{0<\tau<t}\int \left[(\rho-\tilde\rho)^2+|\vel
|^2+(e-\tilde e)^2\right](\x,\tau)W d\x\\
\quad +\int_0^t\int \left[|\nabla\vel|^2+|\nabla e|^2\right](\x,\tau)Wd\x
d\tau \le CC_0
\end{array}
\end{equation}
for any $t>0$, where $W\equiv W(x,\tau):=(1+|\x|^2)^l$ if $\tau\le 1$ and W:=1 elsewhere;
\begin{equation}\label{conv1velsinsob}
\sup\limits_{0<t<1} t\int|\nabla \vel|^2d\x+\int_0^1\int t|\dot\vel|^2d\x dt\le CC_0;
\end{equation}
\begin{equation}\label{conv2velsinsob}
\sup\limits_{0<t<1}t^{2}\int|\dot \vel|^2d\x+\int_0^1\int t^{2}
|\nabla\dot\vel|^2d\x dt\le CC_0.
\end{equation}
In addition, for global positive constants $\theta, q$, 
\begin{equation}\label{einfinito}
\|e(\cdot,t)-\tilde e\|_{L^\infty({\mathbb{R}^2})}\le CC_0^\theta t^{-q}, \quad 0<t\le1;
\end{equation}
\begin{equation}\label{holdercontinuidade}
\left\langle\,\vel\, \,\right\rangle^{\alpha,\alpha/(2+2\alpha)}_{\R^2\times[t,\infty)}, \  \left\langle\, e\,\right\rangle^{\alpha,\alpha/(2+2\alpha)}_{\R^2\times[t,\infty)}
\le CC_0^\theta t^{-q}, \quad 0<t\le1,
\end{equation}
where $C$ may depend additionally on $t$ and $\alpha$, and $\left\langle\cdot\right\rangle^{\alpha,\beta}$ denotes the H\"older semi-norm with exponent $\alpha$ in the $x$ variable and exponent $\beta$ in the $t$ variable;

Furthermore, the solution $(\rho,\vel,e)$ is the limit of smooth approximate solutions $(\rho^\delta,\vel^\delta,e^\delta)$, $\delta\to0$, which satisfy the estimates \eqref{energia}-\eqref{holdercontinuidade} with the constants $C,\theta,q$ on the right hand side of these estimates independent of $\delta$.
\end{teor}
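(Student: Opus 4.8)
The plan is to follow the vanishing--mollification scheme of \cite{hoff-heatconducting}. First I would regularize the initial data, replacing $(\rho_0,\vel_0,e_0)$ by smooth functions $(\rho_0^\delta,\vel_0^\delta,e_0^\delta)$ that converge to the original data in the norms controlled by $C_0$ and that satisfy $\rho_0^\delta\in[\underline\rho,\overline\rho]$ and $\essinf e_0^\delta\ge e_1$ uniformly in $\delta$. For such smooth, bounded--away--from--vacuum data the system \eqref{masaeqc}--\eqref{energiaeqc} admits a local--in--time smooth solution $(\rho^\delta,\vel^\delta,e^\delta)$ by standard parabolic theory. The whole difficulty is then to derive a priori estimates that are uniform in $\delta$ and strong enough to continue the smooth solution globally in time, keeping $\rho^\delta\in[\underline\rho,\overline\rho]$ and $e^\delta\ge\underline e$; the smallness of $C_0$ is exactly what allows a bootstrap/continuation argument to close.

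For the a priori estimates I would proceed in the usual hierarchy. The basic energy bound \eqref{energia} comes from testing the momentum equation \eqref{momentoeq} with $\vel$ and integrating the energy balance \eqref{energiaeq} against the weight $W$, the dissipation terms on the right of \eqref{energiaeqc} being absorbed by the heat--conduction term $K\triangle e$ and the viscous dissipation. The pointwise bounds $\rho^\delta\in[\underline\rho,\overline\rho]$ are obtained by writing the transport equation for $\log\rho^\delta$ along particle paths and coupling it with the effective viscous flux $F:=(\mu+\lambda)\diver\vel-(P-\tilde P)$, which solves an elliptic equation $\triangle F=\diver(\rho\dot\vel)$; smallness of $C_0$ keeps $\rho^\delta$ in the prescribed interval. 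The lower bound $e^\delta\ge\underline e$ follows from a minimum principle for the parabolic equation \eqref{energiaeqc}, and the $L^\infty$ bound \eqref{einfinito} from the heat--kernel/De Giorgi structure of the same equation together with the already established $L^2$ control of $\nabla e^\delta$.

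The estimates \eqref{conv1velsinsob}--\eqref{conv2velsinsob} on the material derivative $\dot\vel$ are the part that must be extended from the barotropic setting, following the techniques of \cite{hoff-santos} and \cite{zhouping}. I would test the convective momentum equation \eqref{momentoeqc} with $t\,\dot\vel$ and integrate by parts: the viscous terms produce $\frac{d}{dt}\int|\nabla\vel|^2$, so after the time weight they yield $\sup_t t\int|\nabla\vel|^2$ and $\int_0^1\!\int t|\dot\vel|^2$, while the pressure contribution $-\int t\,\nabla P\cdot\dot\vel$ is where the non--barotropic coupling $P=(\gamma-1)\rho e$ enters, since now $\nabla P$ carries the factor $\nabla e$; this term is controlled by \eqref{energia} and \eqref{einfinito}. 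For the $t^2$--weighted bound \eqref{conv2velsinsob} I would differentiate \eqref{momentoeqc} along particle paths and test with $t^2\dot\vel$, handling the extra $e$--dependent commutators again through the energy and heat--conduction bounds; an analogous argument applied to \eqref{energiaeqc} controls $\nabla e$. The Hölder seminorms \eqref{holdercontinuidade} then follow by interpolating the $H^1$ bounds on $\vel,F,\omega,e$ in \eqref{regularidade} against these material--derivative bounds, using the two--dimensional structure.

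Finally I would pass to the limit $\delta\to0$. Weak--$*$ limits exist by the uniform estimates, and strong $L^2_{\mathrm{loc}}$ convergence of $\vel^\delta$ and $e^\delta$ follows from Aubin--Lions, using the $\dot\vel$ and $\nabla e$ controls for the time regularity. The delicate point, as always for compressible flow, is the strong convergence of $\rho^\delta$, obtained through the compactness of $F^\delta$ in $H^1$, which lets one pass to the limit in the nonlinear pressure. The main obstacle I anticipate is precisely this coupling: the product $P=(\gamma-1)\rho e$ ties density and internal energy together, while the dissipation sources $\mu(|\nabla\vel|^2+u^k_{x_j}u^j_{x_k})+(\lambda-\mu)(\diver\vel)^2$ in \eqref{energiaeq} are only a priori $L^1$, so keeping $e^\delta$ simultaneously bounded above and bounded away from zero uniformly in $\delta$ --- together with the small--energy bootstrap that keeps $\rho^\delta$ nondegenerate --- is the crux of the argument.
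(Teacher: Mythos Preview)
The paper does not prove this theorem at all: it is stated with the citation \cite{hoff-heatconducting} and used as a black box, so there is no ``paper's own proof'' to compare your sketch against. Your outline is a reasonable summary of the scheme actually carried out in \cite{hoff-heatconducting} (mollify the data, local smooth solution, hierarchy of weighted energy estimates, pointwise density bounds via the effective viscous flux, minimum principle and parabolic $L^\infty$ bound for $e$, then Aubin--Lions compactness and strong convergence of $\rho^\delta$ through the $H^1$ control of $F^\delta$), and in that sense it is correct in spirit.

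One clarification, though: you say that the estimates \eqref{conv1velsinsob}--\eqref{conv2velsinsob} ``must be extended from the barotropic setting, following the techniques of \cite{hoff-santos} and \cite{zhouping}.'' That is a misreading of the situation. Those $t$-- and $t^2$--weighted bounds are already contained in \cite{hoff-heatconducting} itself; the present paper simply quotes them. What the present paper actually \emph{extends} (in Section~\ref{evidence}) is the removal of the time weights under the additional hypothesis $\vel_0\in H^1$, yielding \eqref{conv1vel}--\eqref{conv1ener}; that is Theorem~\ref{teo3}, not Theorem~\ref{solucaofraca}. So for the statement you were asked about, no new work is done here and nothing beyond the citation is expected.
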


Assuming the initial velocity $\vel_0$ in $H^1$, the estimates \eqref{conv1velsinsob}, \eqref{conv2velsinsob} can be improved such that the powers in $t$ decrease by one, and we have also a similar estimate to \eqref{conv1velsinsob} for the internal energy $e$, i.e. we have the following result which we prove in Section \ref{evidence}:  

\begin{teor}\label{teo3} Under the same hypothesis and notations in Theorem \ref{solucaofraca}, if the initial velocity $\vel_0$ is in the Sobolev space $H^1(\mathbb{R}^2)$ and $C_0\le\epsilon$ (possibly with a smaller $\epsilon$ than that in Theorem \ref{solucaofraca}, and with $C_0$, defined in \eqref{E0}, including now the norm $\|\vel_0\|_{H^1(\mathbb{R}^2)}$), then we have the following estimates on the approximated solutions $(\rho^\delta,\vel^\delta,e^\delta)$ stated in Theorem \ref{solucaofraca}, with the constant $C$ as above (in particular, independent of $\delta$):
\begin{equation}\label{conv1vel}
\sup\limits_{0<t<1}\int|\nabla \vel^\delta|^2d\x+\int_0^1\int
 |\dot\vel^\delta|^2d\x dt\le CC_0^\theta;
\end{equation}
\begin{equation}\label{conv2vel}
\sup\limits_{0<t<1}t\int|\dot\vel^\delta|^2d\x+\int_0^1\int t |\nabla\dot\vel^\delta|^2d\x
dt\le CC_0^\theta;
\end{equation}
\begin{equation}\label{conv1ener}
\sup\limits_{0<t<1}\int|\nabla e^\delta|^2d\x+\int_0^1\int |\dot e^\delta|^2d\x dt\le CC_0^\theta.
\end{equation}
\end{teor}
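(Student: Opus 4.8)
The plan is to prove all three bounds as a priori estimates for the smooth approximate solutions $(\rho^\delta,\vel^\delta,e^\delta)$ of Theorem~\ref{solucaofraca}, which solve the classical equations \eqref{masaeqc}--\eqref{energiaeqc}; I will drop the superscript $\delta$, only keeping in mind that every constant must come out independent of $\delta$. The method is the standard higher-order energy scheme for this class of compressible systems: test the momentum equation \eqref{momentoeqc} against the material derivative $\dot\vel$ and the internal-energy equation \eqref{energiaeqc} against $\dot e$, with suitable powers of $t$ as weights, and then close the resulting differential inequalities by Gronwall, using the smallness of $C_0$ and the a priori bounds \eqref{energia}--\eqref{holdercontinuidade}. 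The single new ingredient relative to \eqref{conv1velsinsob}--\eqref{conv2velsinsob} is the hypothesis $\vel_0\in H^1(\R^2)$, which makes $\int|\nabla\vel_0|^2\,d\x\le CC_0$ finite; this, together with the term $\int|\nabla e_0|^2(1+|\x|^2)^l\,d\x$ already present in \eqref{E0}, is exactly what lets one drop the vanishing weight at $t=0$ in \eqref{conv1vel}, \eqref{conv1ener} and lower the power of $t$ by one in \eqref{conv2vel}.

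First I would establish \eqref{conv1vel} and \eqref{conv1ener} together. Multiplying \eqref{momentoeqc} by $\dot\vel$, integrating over $\R^2$, integrating the viscous terms by parts and using $\dot\vel=\vel_t+\vel\cdot\nabla\vel$, one gets an identity of the form
\[
\int\rho|\dot\vel|^2\,d\x+\frac{d}{dt}\Big(\frac{\mu}{2}\!\int|\nabla\vel|^2+\frac{\lambda}{2}\!\int(\diver\vel)^2-\!\int(P-\tilde P)\diver\vel\Big)=R_1(t),
\]
where $R_1(t)$ collects the terms cubic in $\nabla\vel$ and, once $\partial_t P$ has been rewritten via \eqref{masaeqc}--\eqref{energiaeqc}, further terms containing $\nabla e$, $\triangle e$, $\dot e$, $\diver\vel$. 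Multiplying \eqref{energiaeqc} by $\dot e$, integrating, and using $K\!\int\triangle e\,\dot e=-K\!\int\nabla e\cdot\nabla\dot e$ with $\dot e=e_t+\vel\cdot\nabla e$, one gets
\[
\int\rho|\dot e|^2\,d\x+\frac{K}{2}\frac{d}{dt}\!\int|\nabla e|^2=-K\!\int\nabla e\cdot\nabla(\vel\cdot\nabla e)-\!\int P\diver\vel\,\dot e+\!\int\dot e\,Q,
\]
where $Q:=\mu(|\nabla\vel|^2+u^k_{x_j}u^j_{x_k})+(\lambda-\mu)(\diver\vel)^2$ is the quadratic viscous term. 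I would then bound every term on the right by H\"older, Gagliardo--Nirenberg and Sobolev inequalities in $\R^2$, using $\underline\rho\le\rho\le\overline\rho$, $e\ge\underline e$, the bounds \eqref{einfinito}, \eqref{holdercontinuidade}, and the auxiliary elliptic estimates $\|\nabla F\|_{L^2}+\|\nabla\omega\|_{L^2}\lesssim\|\rho\dot\vel\|_{L^2}$ (from the divergence and curl of \eqref{momentoeqc}) and $\|\nabla^2 e\|_{L^2}\lesssim\|\rho\dot e\|_{L^2}+(\text{lower-order quadratic terms in }\nabla\vel)$ (from \eqref{energiaeqc}), with $\nabla\vel$ itself handled through the effective-flux decomposition $(\mu+\lambda)\diver\vel=F+P-\tilde P$; the aim is that $R_1$ and the right side of the energy identity are dominated by $\eta\big(\int\rho|\dot\vel|^2+\int\rho|\dot e|^2\big)+C_\eta\big(\|\nabla\vel\|_{L^2}^2+\|\nabla e\|_{L^2}^2\big)$ plus a remainder integrable in $t$ by \eqref{energia}. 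Adding the two identities with $\eta$ small, integrating over $(0,s)$ with $0<s<1$ — the $t=0$ boundary terms being $\le CC_0$ by \eqref{E0} and $\vel_0\in H^1$ — and applying Gronwall with the smallness of $C_0$ then yields \eqref{conv1vel} and \eqref{conv1ener} simultaneously.

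Next I would prove \eqref{conv2vel}. Taking the convective derivative of \eqref{momentoeqc} (equivalently, applying $\partial_t(\cdot)+\diver(\vel\,\cdot)$ to each component of \eqref{momentoeq}) produces an equation of the schematic form $\rho\ddot\vel-\mu\triangle\dot\vel-\lambda\nabla\diver\dot\vel=-\nabla\dot P+\mathcal C$, where $\mathcal C$ is a sum of commutators formed from products of $\nabla\vel$ with second-order quantities and with $\nabla P$, and $\dot P=(\gamma-1)(-\rho e\diver\vel+\rho\dot e)$ by \eqref{masaeqc}--\eqref{energiaeqc}. Testing against $\dot\vel$ and integrating by parts gives
\[
\frac{1}{2}\frac{d}{dt}\!\int\rho|\dot\vel|^2+\mu\!\int|\nabla\dot\vel|^2+\lambda\!\int(\diver\dot\vel)^2=\int\dot P\,\diver\dot\vel+\int\mathcal C\cdot\dot\vel .
\]
I would multiply by $t$ and integrate over $(0,s)$, $s<1$. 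Since $\int_0^s t\,\frac{d}{dt}\!\int\rho|\dot\vel|^2\,dt=s\!\int\rho|\dot\vel(s)|^2-\int_0^s\!\int\rho|\dot\vel|^2\,dt$ and the last integral is already $\le CC_0^\theta$ by \eqref{conv1vel}, no control of $\dot\vel$ at $t=0$ is required, which is why the weight $t$ (instead of $t^2$) now suffices. The two integrals on the right are estimated, as before, by H\"older, Gagliardo--Nirenberg and Sobolev together with the bounds of Theorem~\ref{solucaofraca} and the just-proven \eqref{conv1vel}, \eqref{conv1ener}, absorbing a small multiple of $\int t|\nabla\dot\vel|^2$ and, in the $\dot P$-term, using the $L^2$-bound on $\dot e$ from \eqref{conv1ener}; Gronwall and the smallness of $C_0$ then finish \eqref{conv2vel}.

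The hard part will be the coupling, through the non-barotropic pressure, between the momentum balance and the energy balance. In the barotropic case $\dot P$ is a multiple of $\diver\vel$ and is absorbed harmlessly into $\|\nabla\vel\|_{L^2}^2$; here $\dot P$ contains $\rho\dot e$, so the estimates for $\dot\vel$ and $\dot e$ cannot be decoupled and must be run simultaneously, and in \eqref{conv2vel} the term $\int\dot P\,\diver\dot\vel$ has to be controlled through the $L^2$-information on $\dot e$ from \eqref{conv1ener}. Moreover, $\nabla P=(\gamma-1)(e\nabla\rho+\rho\nabla e)$ is not in $L^2$ — there is no bound on $\nabla\rho$ — so the usual elliptic estimate for $\|\nabla^2\vel\|_{L^2}$ must be replaced by the effective-flux decomposition and the bound $\|\nabla F\|_{L^2}\lesssim\|\rho\dot\vel\|_{L^2}$. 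A last, bookkeeping-type, obstacle is to arrange the powers of $t$ so that each $t=0$ boundary term lands on a finite piece of $C_0$ — in particular $\int|\nabla\vel_0|^2$, newly available from $\vel_0\in H^1$, and $\int|\nabla e_0|^2(1+|\x|^2)^l$ — while the genuinely singular quantities $\nabla\dot\vel$, $\ddot\vel$ retain the weight $t$; this is precisely where the smallness of $C_0$ and the earlier estimates \eqref{energia}--\eqref{holdercontinuidade} enter to keep the Gronwall argument closed.
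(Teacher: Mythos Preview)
Your proposal is correct and follows essentially the same route as the paper: multiply \eqref{momentoeqc} by $\dot\vel$ and \eqref{energiaeqc} by $\dot e$ to obtain coupled inequalities for $\sup\|\nabla\vel\|_{L^2}^2+\int\!\!\int\rho|\dot\vel|^2$ and $\sup\|\nabla e\|_{L^2}^2+\int\!\!\int\rho|\dot e|^2$, close them simultaneously using the effective-flux decomposition and the smallness of $C_0$, and then apply the operator $t\,\dot u^j(\partial_t+\diver(\cdot\,\vel))$ to \eqref{momentoeqc} for \eqref{conv2vel}, with the $t=0$ boundary term absorbed by the freshly proven \eqref{conv1vel}. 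The only stylistic difference is in the closing mechanism: the paper packages the first stage as a superlinear algebraic inequality $B(t)\le C\bigl(C_0^{\theta}+\sum_{k>1}B(t)^k\bigr)$ for $B:=B_0+B_1$ and closes it by a continuity/smallness argument, whereas you phrase it as a Gronwall step; the two are equivalent once the quartic contributions such as $\|\nabla\vel\|_{L^2}^4$ are handled by treating one factor $\|\nabla\vel\|_{L^2}^2$ as an integrable-in-time coefficient via \eqref{energia}.
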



The estimates \eqref{conv1vel} and \eqref{conv2vel} imply the lagrangian structure (uniqueness of particle paths) in initial time $t=0$, as we show in Section \ref{ls}, following \cite{hoff-santos}. 
 The estimates \eqref{conv1vel}, \eqref{conv2vel} were obtained in \cite{hoff-2dim} in the case that the pressure is a function of the density only (more precisely, of the form $P(\rho) = A \rho ^\gamma$, for constants $ A>0$ and $\gamma>1$). Here, since the pressure depends also on the energy, we have extra difficulties to obtain them. For instance, in our arguments (see Section \ref{evidence}) we need to use \eqref{conv1ener} to obtain \eqref{conv1vel}, \eqref{conv2vel}, i.e. due to the pressure term the three estimates \eqref{conv1vel}-\eqref{conv1ener} are entailed to each other.

\smallskip


The remainder of this paper is organized as follows: in Section \ref{preliminaries} we collect some facts we shall use in the next Sections \ref{evidence}, \ref{ls}. In Section \ref{evidence} we prove the  estimates \eqref{conv1vel}-\eqref{conv1ener} and in Section \ref{ls} we prove Theorem \ref{lagrangiana}.

\section{Preliminaries}
\label{preliminaries}

Throughout this paper we shall use some classical estimates which we recall for the convenience of the reader. 

\smallskip

The Morrey's inequality for a function $f$ in the Sobolev space $W^{1,p}(\mathbb{R}^2)$ with $p>2$ is
\begin{equation}\label{morrey}
\langle f\rangle^\alpha\le C\|\nabla f\|_{L^p(\mathbb{R}^2)}
\end{equation}
where $\alpha=1-\frac{2}{p}$, $\langle \cdot\rangle^\alpha$ denotes the H\"older semi-norm and $C$ is a constant depending only on $p$. As a consequence, we have the estimate 
\begin{equation}\label{infty2p}
\inorma{f}\le C\left(\xnorma{\nabla f}{p}+\dosnorma{f}\right),
\end{equation}
which can be obtained from \eqref{morrey} by writing $f(x)=-\!\!\!\!\!\int_{B_1(0)}|x-y|^\alpha((f(x)-f(y)/|x-y|^\alpha))dy+-\!\!\!\!\!\int_{B_1(0)}|f(y)|dy$ and properly estimating these integrals.
 
We shall use several times the interpolation inequality
\begin{equation}\label{g-n}
\xnorma{f}{p}^p\le C\dosnorma{f}^{2}\dosnorma{\nabla f}^{p-2}.
\end{equation}

\smallskip

A very useful expedient introduced by Hoff (see e.g. \cite{hoff-95}) is to write the momentum equation (see \eqref{momentoeqc}) as 
\begin{equation}
\label{me Fw}
\rho\dot\vel = \nabla F + \mu\mbox{div}\omega
\end{equation} 
i.e. $\rho\dot u^j = F_{x_j} + \mu\omega^{j,k}_{x_k}$, $j=1,2$, where $F$ is the {\em effective viscous flux}, i.e. the quantity $F := (\mu + \lambda)\mbox{div}\,\vel - (P-\tilde P)$ (mentioned earlier), $\tilde P:=P(\tilde\rho,\tilde e)$, and $\omega\equiv (\omega^{j,k})$, $j,k=1,2$, is the vorticity matrix, i.e. $\omega^{j,k}=u^j_{x_j}-u^k_{x_j}$. \ 
Indeed, applying the $div$ and the $curl$ operators to \eqref{me Fw} we obtain
\begin{equation}
\Delta F=\mbox{div}(\rho\dot\vel), \quad\quad \mu\Delta\omega=\mbox{curl}(\rho\dot\vel) 
\end{equation}
where the last equation means $\mu\Delta\omega^{j,k}=\rho\dot u^j_{x_k}-\rho\dot u^k_{x_j}$, $j,k=1,2$. \ Then by elliptic theory, given any $p\in (1,\infty)$, there is a constant $C$ such that
\begin{equation}
\label{F w Lp}
\xnorma{\nabla F(\cdot,t)}{p}, \ \xnorma{\omega^{j,k}(\cdot,t)}{p} \ \le\ C\xnorma{\rho\dot{\vel}(\cdot,t)}{p}, \quad t>0.
\end{equation}
On the other hand, from the identity $\Delta u^j=(\lambda+\mu)^{-1}F_{x_j}+\omega^{j,k}_{x_k}+(\lambda+\mu)^{-1}(P-\tilde P)_{x_j}$ it follows that
\begin{equation}
\label{nabla u Lp}
\begin{array}{rl}
\pnorma{\nabla\vel(\cdot,t)} \le& C(\pnorma{F(\cdot,t)}+\pnorma{\omega(\cdot,t)}\\
& +\pnorma{(P-\tilde P)(\cdot,t)}), \quad\quad\quad t>0.
\end{array}
\end{equation}
Furthermore, writing $(\lambda+\mu)^{-1}F_{x_j}+\omega^{j,k}_{x_k}=\Delta u^j_{F,\omega}$ and $(\lambda+\mu)^{-1}(P-\tilde P)_{x_j}=\Delta u^j_{P}$, we have $\vel=\vel_{F,\omega}+\vel_P$, with $\vel_{F,\omega}$ satisfying the estimate 
\begin{equation}
\label{d2 ufw}
\xnorma{D^2 \vel_{F,\omega}(\cdot,t)}{p} \le C\xnorma{\rho\dot{\vel}(\cdot,t)}{p}, \quad t>0,
\end{equation}
in virtue of \eqref{F w Lp}. The inequalities \eqref{F w Lp}-\eqref{d2 ufw} will be used in the next sections. Regarding the part $\vel_{P}$ we have the following (leading to \eqref{uP LL} below):

\smallskip

Let us denote by $LL$ the the space of log-lipschitzian functions in $\mathbb{R}^2$,
i.e., the space of functions (or vector functions) $f$ defined in $\mathbb{R}^2$ such that the norm
\[
\llnorma{f}:=\llseminorma{f}+\inorma{f}
\]
is finite, where $\llseminorma{\cdot}$ denotes {\em log-Lipschitzian seminorm} defined by
\[
\llseminorma{f}:=\sup\limits_{0<|\x-\y|\le1}\dfrac{|f(\x)-f(\y)|}{m(|\x-\y|)}
\]
being
\[
m(r):=\begin{cases}
r(1-\log r),&\text{ if }0<r\le1\\
r,&\text{ if }r>1.
\end{cases}
\]
Then we have the following result:
 
\begin{lema}\label{solfundamental} 
Let $\Gamma$ denote the fundamental solution of the laplacian in $\mathbb{R}^2$.

\noindent
1. \ If $1\le p_1<2<p_2\le\infty$ and $f\in L^{p_1}(\mathbb{R}^2)\cap L^{p_2}(\mathbb{R}^2)$ then the vector field $f*\nabla\Gamma$, where $*$ denotes the standard convolution in $\mathbb{R}^2$ (i.e. $(f*\nabla\Gamma)(x)=\int_{\mathbb{R}^2}f(x-y)\nabla\Gamma(y)dy$, $x\in\mathbb{R}^2$) belongs to $L^\infty(\mathbb{R}^2)$ and
\begin{equation}
\label{lipschitz}
\inorma{f*\nabla\Gamma}\le C(\xnorma{f}{p_1}+\xnorma{f}{p_2}),
\end{equation}
where $C$ is a constant depending only on $p_1$ and $p_2$.

\noindent
2. \ If $1\le p<2$ e $f\in L^{p}(\mathbb{R}^2)\cap L^{\infty}(\mathbb{R}^2)$ then $f*\nabla\Gamma\in LL$ and
\begin{equation}
\label{loglip}
\llnorma{f*\nabla\Gamma}\le C(\xnorma{f}{p}+\inorma{f}),
\end{equation}
where $C$ is a constant depending only on $p$.
\end{lema}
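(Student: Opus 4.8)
The plan for both parts is to reduce everything to two elementary pointwise facts about the fundamental solution $\Gamma(\x)=\tfrac{1}{2\pi}\log|\x|$ of the laplacian in $\R^2$: that $|\nabla\Gamma(\x)|=\tfrac{1}{2\pi|\x|}$ and $|D^2\Gamma(\x)|\le C|\x|^{-2}$, together with the observation that $|\x|^{-a}$ lies in $L^q(B_1)$ precisely when $aq<2$ and in $L^q(\R^2\setminus B_1)$ precisely when $aq>2$ (with the usual modification when $q=\infty$), where $B_1$ denotes the unit ball centered at the origin.

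For part~1 I would split
\[
(f*\nabla\Gamma)(\x)=\int_{|\y|\le1}f(\x-\y)\nabla\Gamma(\y)\,d\y+\int_{|\y|>1}f(\x-\y)\nabla\Gamma(\y)\,d\y
\]
and bound each integral by H\"older's inequality: on $B_1$ with the conjugate pair $(p_2,p_2')$, which is legitimate because $p_2>2$ forces $p_2'<2$ and hence $\nabla\Gamma\in L^{p_2'}(B_1)$, giving the bound $\xnorma{f}{p_2}\,\|\nabla\Gamma\|_{L^{p_2'}(B_1)}$; and on $\R^2\setminus B_1$ with the conjugate pair $(p_1,p_1')$, legitimate because $p_1<2$ forces $p_1'>2$ and hence $\nabla\Gamma\in L^{p_1'}(\R^2\setminus B_1)$, giving the bound $\xnorma{f}{p_1}\,\|\nabla\Gamma\|_{L^{p_1'}(\R^2\setminus B_1)}$. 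Adding the two estimates proves \eqref{lipschitz}; the endpoint cases $p_1=1$ and $p_2=\infty$ are covered since $\nabla\Gamma\in L^\infty(\R^2\setminus B_1)$ and $\nabla\Gamma\in L^1(B_1)$.

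For part~2 the bound $\inorma{f*\nabla\Gamma}\le C(\xnorma{f}{p}+\inorma{f})$ is exactly part~1 applied with $p_1=p$ and $p_2=\infty$, so the only real work is the log-Lipschitz seminorm. I would fix two points $\x,\x'$ with $r:=|\x-\x'|\in(0,1]$ and write
\[
(f*\nabla\Gamma)(\x)-(f*\nabla\Gamma)(\x')=\int_{\R^2}f(\y)\bigl(\nabla\Gamma(\x-\y)-\nabla\Gamma(\x'-\y)\bigr)\,d\y,
\]
then decompose $\R^2$ into the near field $A=\{\,|\x-\y|\le2r\,\}$ and the far field $B=\{\,|\x-\y|>2r\,\}$. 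On $A$ I estimate the two terms separately, using $|f|\le\inorma{f}$, changes of variables, the inclusion $|\x-\y|\le2r\Rightarrow|\x'-\y|\le3r$, and $\int_{|\z|\le3r}|\nabla\Gamma(\z)|\,d\z\le Cr$, obtaining an $A$-contribution $\le C\inorma{f}\,r$. On $B$ one has $r<\tfrac12|\x-\y|$, so every point $\z$ on the segment joining $\x-\y$ to $\x'-\y$ satisfies $|\z|\ge|\x-\y|-r>\tfrac12|\x-\y|$, and the mean value theorem gives $|\nabla\Gamma(\x-\y)-\nabla\Gamma(\x'-\y)|\le r\sup_{\z}|D^2\Gamma(\z)|\le Cr|\x-\y|^{-2}$. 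I would then split the $B$-integral of $|f(\y)|\,Cr|\x-\y|^{-2}$ into the annulus $\{2r<|\x-\y|\le1\}$ (empty when $2r\ge1$) and the tail $\{|\x-\y|>1\}$: the first is $\le Cr\inorma{f}\int_{2r}^{1}\tfrac{ds}{s}=Cr\inorma{f}\log\tfrac{1}{2r}\le Cr\inorma{f}(1-\log r)$ --- this is where the logarithm is produced --- and the second is $\le Cr\,\xnorma{f}{p}\,\bigl\|\,|\cdot|^{-2}\,\bigr\|_{L^{p'}(\R^2\setminus B_1)}\le Cr\,\xnorma{f}{p}$, the last norm being finite because $2p'>2$ (which holds since $p<2$). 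Since $r\le r(1-\log r)=m(r)$ for $0<r\le1$, adding the three contributions yields $|(f*\nabla\Gamma)(\x)-(f*\nabla\Gamma)(\x')|\le C(\xnorma{f}{p}+\inorma{f})\,m(r)$; combined with the $L^\infty$ bound this is \eqref{loglip}.

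The kernel bounds and the H\"older estimates are routine, and I expect the main obstacle to be getting the three-region decomposition in part~2 exactly right: one must arrange that the factor $2$ in the far field $\{|\x-\y|>2r\}$ keeps the whole segment away from the origin (so that the second-order mean value estimate is valid there), and then see that it is precisely the intermediate annulus $\{2r<|\x-\y|\le1\}$ that produces the factor $1-\log r$ while the tail $\{|\x-\y|>1\}$ still retains a full factor of $r$ --- which is why in the tail one cannot simply estimate the two convolutions separately.
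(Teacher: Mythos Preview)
Your proof is correct and follows essentially the same scheme as the paper's: for part~1 you both split at the unit ball and apply H\"older with the two exponents on the two pieces; for part~2 you both use a three-region decomposition (near field controlled by $\inorma{f}$, intermediate annulus controlled by $\inorma{f}$ via the mean value estimate $|D^2\Gamma|\le C|\cdot|^{-2}$ and producing the logarithm, and tail controlled by $\xnorma{f}{p}$ via H\"older). The only cosmetic difference is that the paper centres its decomposition at the midpoint $\bar\x=\tfrac{\x+\x'}{2}$ and uses the regions $B_\varepsilon(\bar\x)$, $B_2(\bar\x)\setminus B_\varepsilon(\bar\x)$, $B_2(\bar\x)^c$, whereas you centre at $\x$ with radii $2r$ and $1$; this changes nothing of substance.
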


\begin{obs} This lemma holds true in $\mathbb{R}^n$, with the same proof, replacing the conditions on the $p's$ by $p_1<n<p_2$ and $p<n$.
\end{obs}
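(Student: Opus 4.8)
The plan is to observe that the entire argument for Lemma~\ref{solfundamental} uses the fundamental solution only through the two pointwise bounds $|\nabla\Gamma(z)|\le C|z|^{1-n}$ and $|\nabla^2\Gamma(z)|\le C|z|^{-n}$ for $z\ne0$, which hold in every dimension $n\ge2$ (with $\Gamma(z)=\frac{1}{2\pi}\log|z|$ when $n=2$ and $\Gamma(z)=c_n|z|^{2-n}$ when $n\ge3$), so that $\nabla\Gamma$ is homogeneous of degree $1-n$ and $\nabla^2\Gamma$ of degree $-n$. The Lebesgue exponent separating local from global integrability of $\nabla\Gamma$ is then $n/(n-1)$, which equals $2$ precisely when $n=2$; replacing $2$ by $n/(n-1)$ throughout is exactly what turns the conditions $p_1<2<p_2$ and $p<2$ into $p_1<n<p_2$ and $p<n$, and otherwise the proof is carried out verbatim.

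For part~1, I would split $(f*\nabla\Gamma)(x)=\int_{B_1(0)}+\int_{\R^n\setminus B_1(0)}$. On the ball the kernel lies in $L^{q}(B_1(0))$ exactly when $q<n/(n-1)$, since $\int_{B_1}|z|^{(1-n)q}\,dz=\omega_{n-1}\int_0^1 r^{\,n-1-(n-1)q}\,dr$ converges iff $(n-1)q<n$; pairing this, by H\"older, with the factor $f\in L^{p_2}$ forces the conjugate exponent $p_2'<n/(n-1)$, i.e. $p_2>n$ (the endpoint $p_2=\infty$ being admissible because $\nabla\Gamma\in L^1_{\mathrm{loc}}$ for all $n\ge2$). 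On the complement one has $\nabla\Gamma\in L^{q}$ iff $q>n/(n-1)$, and H\"older against $f\in L^{p_1}$ forces $p_1'>n/(n-1)$, i.e. $p_1<n$. The resulting bounds are uniform in $x$, giving \eqref{lipschitz} under the stated conditions.

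For part~2, both the finiteness of $f*\nabla\Gamma$ (its far tail) and the $L^\infty$ bound in $\llnorma{f*\nabla\Gamma}$ follow from part~1 applied with $p_1=p<n$ and $p_2=\infty>n$. To control the seminorm $\llseminorma{f*\nabla\Gamma}$, set $h=|x-x'|\le1$ and write the difference as $\int f(y)\,[\nabla\Gamma(x-y)-\nabla\Gamma(x'-y)]\,dy$, to be split over the regions $|x-y|<2h$, $2h\le|x-y|\le1$, and $|x-y|>1$. On the innermost region I estimate the two terms separately using $\|f\|_{L^\infty(\R^n)}$ and $\int_{|z|<3h}|z|^{1-n}\,dz\lesssim h$, an $O(h)$ contribution; on the outermost region I use the mean value estimate $|\nabla\Gamma(x-y)-\nabla\Gamma(x'-y)|\lesssim h\,|x-y|^{-n}$ together with $f\in L^{p}(\R^n)$, again $O(h)$.

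The main obstacle, and the one place where the dimension must be watched, is the intermediate annulus $2h\le|x-y|\le1$. There the mean value estimate $|\nabla\Gamma(x-y)-\nabla\Gamma(x'-y)|\le Ch\sup|\nabla^2\Gamma|\lesssim h\,|x-y|^{-n}$ combines with the polar Jacobian to give $\int_{2h\le|z|\le1}h|z|^{-n}\,dz=\omega_{n-1}h\int_{2h}^1 r^{-n}r^{\,n-1}\,dr=\omega_{n-1}h\int_{2h}^1 r^{-1}\,dr\sim h\log(1/h)$, which added to the $O(h)$ contributions reproduces $m(h)=h(1-\log h)$ and yields \eqref{loglip}. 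The crucial observation is that the degree-$(-n)$ homogeneity of $\nabla^2\Gamma$ and the degree-$(n-1)$ factor from polar coordinates cancel in every dimension, so the borderline integral is $\int r^{-1}\,dr$ regardless of $n$; this is precisely why the log-Lipschitz modulus $m$, and hence the seminorm $\llseminorma{\cdot}$, is dimension-independent and the argument transfers with no structural change.
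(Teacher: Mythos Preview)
Your argument is correct and follows the same route as the paper's own proof of Lemma~\ref{solfundamental}: split $f*\nabla\Gamma$ into near and far field for part~1, and split the difference into three concentric regions for part~2, using the homogeneity of $\nabla\Gamma$ and $\nabla^2\Gamma$ together with H\"older and the mean-value estimate (the paper centers its three regions at the midpoint $\bar x$ with radii $\varepsilon,2$ rather than at $x$ with radii $2h,1$, but this is immaterial). One small expository slip: the sentence ``replacing $2$ by $n/(n-1)$ throughout is exactly what turns $p_1<2<p_2$ into $p_1<n<p_2$'' is literally false (that substitution would give $p_1<n/(n-1)<p_2$), but your subsequent duality computation $p_2'<n/(n-1)\Leftrightarrow p_2>n$ is the correct derivation, so the content stands.
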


\begin{proof} To the first estimate, separate the integral $f*\nabla\Gamma$ in the ball $B_1(0)$ and in its complementary. Then, estimate each integral by applying the H\"older's inequality in a convenient way. As for the second estimate, take $x, y\in \mathbb{R}^n$ with $\varepsilon = |x-y|\leq 1$, $\bar x=\dfrac{x-y}{2}$, and separate the integral in $(f*\Gamma_{x_j})(x)-(f*\Gamma_{x_j})(y)$, $j=1,2,...,n$, in the balls $B_\varepsilon(\bar x)$, $B_2(\bar x)\setminus B_\varepsilon(\bar x)$ and in $B_2(\bar x)^c$. Then it is possible to estimate the integral in these sets, respectively, by $\varepsilon\|f\|_{L^{\infty}(\mathbb{R}^n)}$, $\varepsilon(\ln 3-\ln\varepsilon)\|f\|_{L^\infty(\mathbb{R}^n)}$ and $\varepsilon \|f\|_{L^{p}(\mathbb{R}^n)}$, times some constant. 
\end{proof}

As a corollary of Lema \ref{solfundamental}, given any $p\in [1,2)$, we obtain the following estimate for the second part $\vel_P$ in the decomposition $\vel=\vel_{F+\omega}+\vel_P$ introduced above:
\begin{equation}
\label{uP LL}
\llnorma{\vel_P(\cdot,t)}\le C(\xnorma{(P-\tilde P)(\cdot,t)}{p}+\inorma{(P-\tilde P)(\cdot,t)}), \quad t>0.
\end{equation}
Indeed, in the above decomposition we can take $\vel_P$ as $\vel_P=\Gamma*\nabla(P-\tilde P)=(\nabla\Gamma)*(P-\tilde P)=(P-\tilde P)*\nabla\Gamma$. Thus, \eqref{uP LL} is a consequence of  
\eqref{loglip}, since by \eqref{energia} we have $(P-\tilde P)(\cdot,t)\in L^p(\mathbb{R}^2)$, for any $p\in [2/(1+l),2]$, and $(P-\tilde)(\cdot,t)\in L^\infty(\mathbb{R}^2)$ by \eqref{einfinito}. In fact, we shall need $\llnorma{\vel_P(\cdot,t)}$ to be locally integrable in time. The estimate \eqref{energia} gives that $\xnorma{(P-\tilde P)(\cdot,t)}{p}$ has this property, so by \eqref{uP LL}, to have that it is enough that $\inorma{(P-\tilde P)(\cdot,t)})$ to be locally integrable. Fortunately, we have
\begin{lema} \cite[Lemma 4.4]{hoff-heatconducting} \ 
\begin{equation}\label{integrale}
\int_0^1\inorma{e-\tilde e(\cdot,t)}dt\le CC_0^\theta.
\end{equation}
\end{lema}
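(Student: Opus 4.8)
The plan is to trade the crude bound $\inorma{e(\cdot,t)-\tilde e}\le CC_0^\theta t^{-q}$ coming from \eqref{einfinito} — which is only time‑integrable on $(0,1)$ if $q<1$ — for a bound in terms of $\xnorma{\nabla e(\cdot,t)}{p}$ with a fixed $p\in(2,\infty)$, and then to integrate in $t$ using the energy and convective‑derivative estimates of Theorem \ref{solucaofraca}. Concretely, I would first apply the Morrey‑type inequality \eqref{infty2p} to $e(\cdot,t)-\tilde e$ to get $\inorma{e(\cdot,t)-\tilde e}\le C\big(\xnorma{\nabla e(\cdot,t)}{p}+\dosnorma{e(\cdot,t)-\tilde e}\big)$. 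Since the weight $W\ge1$ for $\tau\le1$, \eqref{energia} gives $\dosnorma{e(\cdot,t)-\tilde e}\le CC_0^{1/2}$ uniformly for $t\in(0,1)$, so this term contributes at most $CC_0^{1/2}\le CC_0^\theta$ after integration; the whole problem reduces to estimating $\int_0^1\xnorma{\nabla e(\cdot,t)}{p}\,dt$.

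For the gradient term I would interpolate with \eqref{g-n} applied to $\nabla e$, namely $\xnorma{\nabla e}{p}\le C\dosnorma{\nabla e}^{2/p}\dosnorma{D^2e}^{(p-2)/p}$, and use the elliptic $W^{2,2}$ estimate on $\R^2$ (valid because $e-\tilde e$ decays at infinity, being in $L^2\cap L^\infty$ by \eqref{regularidade}) to bound $\dosnorma{D^2e}\le C\dosnorma{\Delta e}$. Reading $\Delta e$ off the energy equation in convective form \eqref{energiaeqc}, $K\Delta e=\rho\dot e+P\diver\vel-\mu(|\nabla\vel|^2+u^k_{x_j}u^j_{x_k})-(\lambda-\mu)(\diver\vel)^2$, and using $\rho\in[\underline\rho,\overline\rho]$, one gets $\dosnorma{\Delta e}\le C\big(\dosnorma{\dot e}+\dosnorma{P\diver\vel}+\xnorma{\nabla\vel}{4}^2\big)$. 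The $\dosnorma{\dot e}$ term is controlled in $L^2_t(0,1)$ by whatever weighted bound on $\dot e$ is available (the unweighted \eqref{conv1ener} of Theorem \ref{teo3}, or a $t$‑weighted analogue from \cite{hoff-heatconducting}); for $\xnorma{\nabla\vel}{4}$ I would combine \eqref{nabla u Lp}, \eqref{F w Lp} and \eqref{g-n} to write $\xnorma{\nabla\vel}{4}^4\le C(\dosnorma{F}^2+\dosnorma{\omega}^2)\dosnorma{\rho\dot\vel}^2+C\xnorma{P-\tilde P}{4}^4$, together with $\dosnorma{F},\dosnorma{\omega}\le C(\dosnorma{\nabla\vel}+\dosnorma{P-\tilde P})$ and the $L^2_t$ bounds on $\dosnorma{\dot\vel}$ from \eqref{conv1velsinsob}–\eqref{conv2velsinsob} (or \eqref{conv1vel}–\eqref{conv2vel}).

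The pressure contributions have to be treated without ever invoking $\inorma{P-\tilde P}$, which is essentially $\inorma{e-\tilde e}$ up to lower order and is exactly the singular quantity. Instead, from $P-\tilde P=(\gamma-1)\big(\rho(e-\tilde e)+\tilde e(\rho-\tilde\rho)\big)$ and the boundedness of $\rho-\tilde\rho$, \eqref{energia} yields $\dosnorma{P-\tilde P}\le CC_0^{1/2}$, and combining this with $\dosnorma{\nabla e}$ (controlled by \eqref{energia}, time‑averaged, or by \eqref{conv1ener}) through \eqref{g-n} gives $\xnorma{P-\tilde P}{4}\le CC_0^{\theta}$ with an integrable profile in $t$; hence $\dosnorma{P\diver\vel}\le|\tilde P|\dosnorma{\diver\vel}+\xnorma{P-\tilde P}{4}\xnorma{\diver\vel}{4}\le C\dosnorma{\nabla\vel}+CC_0^\theta\xnorma{\nabla\vel}{4}$, whose square is absorbed into the terms already estimated. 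Finally I would assemble everything with H\"older's inequality in $t$: the exponent $(p-2)/p$ on $\dosnorma{D^2e}$ is $<1$, so $\int_0^1\xnorma{\nabla e}{p}\,dt$ is dominated by powers of $\int_0^1\dosnorma{\nabla e}^2\,dt$ and $\int_0^1\dosnorma{\Delta e}^2\,dt$ (possibly after integrating a negative power of $t$ strictly larger than $-1$, if one starts only from the weighted estimates), each of which is $\le CC_0^\theta$ by the above; picking $\theta$ small enough that every power of $C_0$ appearing is $\ge\theta$ (recall $C_0\le\epsilon\ll1$) closes the argument.

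The main obstacle is exactly the nonlinear coupling through the pressure $P=(\gamma-1)\rho e$. In the barotropic case of \cite{hoff-2dim,hoff-santos} the term $P\diver\vel$ is harmless because $P(\rho)$ is bounded, whereas here $\inorma{P-\tilde P}$ is the very quantity under study and degenerates like $t^{-q}$ as $t\to0$; one must therefore replace it systematically by the controlled $L^2$ and $L^4$ norms and check that the resulting web of estimates for $e$, $\vel$, $F$, $\omega$ and $P-\tilde P$ closes without circularity and with a uniform gain $C_0^\theta$. Keeping the negative powers of $t$ under control near $t=0$ (so that each time integral converges) is the accompanying bookkeeping difficulty, and is where the improved estimates of Theorem \ref{teo3}, if available, would shorten the argument.
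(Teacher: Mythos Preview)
The paper does not give its own proof of this lemma: it is quoted verbatim as \cite[Lemma 4.4]{hoff-heatconducting} and used later in Section~\ref{ls} without further justification. So there is no argument in the paper to compare your proposal against; what you have written is an attempted reconstruction of Hoff's proof.

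Your reconstruction is reasonable and, in the version that invokes Theorem~\ref{teo3}, it does close. From \eqref{infty2p} and \eqref{g-n} you reduce to bounding $\int_0^1\dosnorma{D^2 e}^2\,dt$, then read $\Delta e$ off \eqref{energiaeqc}; the resulting terms $\int_0^1\dosnorma{\dot e}^2\,dt$ and $\int_0^1\xnorma{\nabla\vel}{4}^4\,dt$ are exactly what \eqref{conv1ener} and the remark following \eqref{stage1} deliver, and the pressure pieces are handled by your $L^4$ argument without touching $\inorma{P-\tilde P}$. Note, however, that this route uses the extra hypothesis $\vel_0\in H^1$, whereas the cited lemma in \cite{hoff-heatconducting} is established under the weaker assumptions of Theorem~\ref{solucaofraca} alone, using only the $t$-weighted estimates of that paper; your parenthetical ``$t$-weighted analogue'' is therefore the version closer in spirit to Hoff's original, and there the bookkeeping with negative powers of $t$ that you flag is indeed the heart of the matter. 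For the purposes of the present paper (whose main theorem assumes $\vel_0\in H^1$ anyway) either route suffices, and there is no circularity since Section~\ref{evidence} nowhere uses \eqref{integrale}.
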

\noindent
Then, combining \eqref{integrale} with $\underline\rho\le\rho\le\overline\rho$ (see \eqref{regularidade}), it follows that $\inorma{(P-\tilde P)(\cdot,t)}$ is locally integrable.

\bigskip

\section{Estimates of convective terms}\label{evidence}

In this Section we show the estimates \eqref{conv1vel}-\eqref{conv1ener}. For convenience we omit the superscript $\delta$ in the approximate solution $(\rho^\delta,\vel^\delta,e^\delta)$.

We begin by defining the functionals
\[
\begin{array}{rcl}
B_0(t)&=&\supintnormard{t}{\nabla \vel}{\dot\vel},\\
B_1(t)&=&\supintnormard{t}{\nabla e}{\dot e},\\
B(t)&=&B_0(t)+B_1(t).
\end{array}
\]

\begin{lema}\label{b0}
If $t\le 1$, then
\begin{align*}
B_0(t) \le C(C_0^{\theta}+\sum\limits_{k>1}B(t)^{k})
\end{align*}
\end{lema}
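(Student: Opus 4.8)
The goal is to estimate $B_0(t)=\sup_{0\le\tau\le t}\int|\nabla\vel|^2d\x+\int_0^t\int\rho|\dot\vel|^2d\x d\tau$ on $[0,1]$ in terms of $C_0^\theta$ and the higher powers of the full functional $B=B_0+B_1$. The natural route is the standard energy identity for the momentum equation tested against $\dot\vel$. First I would multiply the convective momentum equation \eqref{momentoeqc}, $\rho\dot\vel=-\nabla P+\mu\triangle\vel+\lambda\nabla(\diver\vel)$, by $\dot\vel=\vel_t+\vel\cdot\nabla\vel$ and integrate over $\R^2$. The viscous terms produce, after integration by parts and use of the mass equation \eqref{masaeqc} to handle the time derivative of $\nabla\vel$, a term of the form $\frac{d}{d\tau}\int\big(\tfrac\mu2|\nabla\vel|^2+\tfrac\lambda2(\diver\vel)^2\big)d\x$ plus commutator terms that are cubic in $\nabla\vel$ (the classical $\int\partial\vel\,\partial\vel\,\partial\vel$ terms). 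The pressure term $-\int\nabla P\cdot\dot\vel\,d\x$ is the genuinely non-barotropic contribution: writing $P=(\gamma-1)\rho e$ and integrating by parts, $-\int\nabla P\cdot\dot\vel = \int (P-\tilde P)\diver\dot\vel$, and $\diver\dot\vel=(\diver\vel)^{\cdot}+\nabla\vel:\nabla\vel^T$ (up to sign conventions); the first piece again yields a total time derivative after using $\dot P$, which via \eqref{masaeqc} and \eqref{energiaeqc} brings in $\diver\vel$ and, crucially, $\dot e$ and $\triangle e$. This is where $B_1$ enters.

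\textbf{Key steps in order.} (i) Derive the basic identity
\[
\frac{d}{d\tau}\int\Big(\tfrac\mu2|\nabla\vel|^2+\tfrac\lambda2(\diver\vel)^2-(P-\tilde P)\diver\vel\Big)d\x+\int\rho|\dot\vel|^2d\x=\mathcal R(\tau),
\]
where $\mathcal R$ collects the cubic terms in $\nabla\vel$ coming from the convective part of $\dot\vel$ acting on the viscous terms, together with the terms arising from $\dot P$ hitting $\diver\vel$. (ii) Integrate in $\tau$ over $[0,t]$, $t\le1$; the boundary term at $\tau=0$ is controlled by $\|\vel_0\|_{H^1}^2\le C_0$ and by $\|P_0-\tilde P\|_{L^2}\|\diver\vel_0\|_{L^2}$, hence by $CC_0$ (after absorbing via Young). (iii) Estimate $\mathcal R$. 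The cubic term $\int|\nabla\vel|^3$ is handled by Gagliardo--Nirenberg \eqref{g-n} with $p=3$: $\int|\nabla\vel|^3\le C\|\nabla\vel\|_{L^2}\|\nabla\vel\|_{L^2}\,\|D^2\vel\|_{L^2}$, then use the decomposition $\vel=\vel_{F,\omega}+\vel_P$ with \eqref{d2 ufw} giving $\|D^2\vel_{F,\omega}\|_{L^2}\le C\|\rho\dot\vel\|_{L^2}$ and $\|D^2\vel_P\|_{L^2}\le C\|\nabla(P-\tilde P)\|_{L^2}\le C(\|\nabla\rho\|\,\|e\|_\infty+\|\nabla e\|_{L^2})$; the $\|\nabla e\|_{L^2}$ factor is exactly $B_1^{1/2}$-type, and $\|\rho\dot\vel\|_{L^2}^2$ is $B_0$-type, so these products are bounded by $C C_0^\theta+C\,(B_0+B_1)^{k}$ with $k\ge2$ after Young's inequality, the small factor $\|\nabla\vel\|_{L^2}^2\le CC_0$ from \eqref{energia} providing the smallness needed to absorb one copy of $\int\rho|\dot\vel|^2$ into the left side. (iv) For the pressure-time-derivative terms, substitute $\dot\rho=-\rho\diver\vel$ and $\rho\dot e=K\triangle e-P\diver\vel+\mu(|\nabla\vel|^2+u^k_{x_j}u^j_{x_k})+(\lambda-\mu)(\diver\vel)^2$; the $\triangle e$ term, after integration by parts against $\diver\vel$, gives $\int\nabla e\cdot\nabla\diver\vel$ which is bounded using $\|\nabla e\|_{L^2}\|D^2\vel\|_{L^2}$ and the same decomposition, and the remaining quadratic-in-$\nabla\vel$ terms times $(P-\tilde P)$ or $e$-bounds are absorbed using $\|e-\tilde e\|_\infty$ from \eqref{einfinito} and smallness of $C_0$. (v) Collect: every term on the right is either $\le CC_0^\theta$ or a product of functionals bounded by $C\sum_{k>1}B(t)^k$, with the left side dominating $B_0(t)$ up to the sign-indefinite term $\int(P-\tilde P)\diver\vel$, which is absorbed by Young into $\tfrac\mu4\int|\nabla\vel|^2$ plus $C\int(P-\tilde P)^2\le CC_0$.

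\textbf{Main obstacle.} The principal difficulty is the pressure coupling in step (iv): unlike the barotropic case, $\dot P$ cannot be expressed through $\rho$ and $\diver\vel$ alone — it forces the internal-energy equation into the estimate, producing a $\triangle e$ term that, when paired with $\diver\vel$, generates $\|D^2\vel\|_{L^2}\|\nabla e\|_{L^2}$, i.e. a term mixing $B_0$ and $B_1$. One must be careful that this term is genuinely at least quadratic in $B$ (so it can be absorbed for small data) and does not hide a linear-in-$B$ piece with a constant that fails to be small; this is precisely why the three estimates \eqref{conv1vel}--\eqref{conv1ener} are coupled and must be closed simultaneously (the analogous Lemma for $B_1$, and then a bootstrap on $B=B_0+B_1$). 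Keeping track of the time weights is straightforward here since $t\le1$ and no weight appears in $B_0,B_1$, but the analogous weighted estimates \eqref{conv2vel} will need the parallel argument with a factor $\tau$ and an extra commutator from $\partial_\tau(\tau\,\cdot)$.
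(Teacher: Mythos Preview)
Your overall strategy---test \eqref{momentoeqc} against $\dot\vel$, pull out $\frac{d}{d\tau}\int(\frac\mu2|\nabla\vel|^2+\frac\lambda2(\diver\vel)^2)$, and control the pressure and cubic remainders---matches the paper. But there is a genuine gap in steps (iii) and (iv): both rely on bounding $\|D^2\vel\|_{L^2}$ via the decomposition $\vel=\vel_{F,\omega}+\vel_P$, and you write $\|D^2\vel_P\|_{L^2}\le C\|\nabla(P-\tilde P)\|_{L^2}\le C(\|\nabla\rho\|\,\|e\|_\infty+\|\nabla e\|_{L^2})$. The quantity $\|\nabla\rho\|_{L^2}$ is \emph{not available}: the density lies only in $L^2\cap L^\infty$ (it may be discontinuous), and there is no bound on $\nabla\rho^\delta$ uniform in $\delta$. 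Any route through $\|D^2\vel\|_{L^2}$ or $\|\nabla P\|_{L^2}$ fails for this reason.

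The paper avoids this entirely by working one derivative lower. For the cubic term it writes $\int|\nabla\vel|^3\le(\int|\nabla\vel|^2)^{1/2}(\int|\nabla\vel|^4)^{1/2}$ and then uses the \emph{first-order} decomposition \eqref{nabla u Lp}, $\|\nabla\vel\|_{L^4}\le C(\|F\|_{L^4}+\|\omega\|_{L^4}+\|P-\tilde P\|_{L^4})$; Gagliardo--Nirenberg is applied to $F$ and $\omega$ separately (so only $\|\nabla F\|_{L^2},\|\nabla\omega\|_{L^2}\le C\|\rho\dot\vel\|_{L^2}$ appear), while $\|P-\tilde P\|_{L^4}$ needs no derivative of $\rho$. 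For the pressure time-derivative the paper does \emph{not} substitute the energy equation: it simply keeps the term $-(\gamma-1)\int_0^t\!\!\int\rho\dot e\,\diver\vel\,d\x d\tau$ and bounds it by Cauchy--Schwarz as $CC_0^{1/2}B_1(t)^{1/2}$, which after Young is $\le C(C_0^\theta+B(t)^k)$ with $k>1$. Your step (iv)---inserting $\rho\dot e=K\triangle e+\cdots$ and then pairing $\triangle e$ with $\diver\vel$---is unnecessary and reintroduces the same forbidden $\|D^2\vel\|_{L^2}$. Replace your Gagliardo--Nirenberg on $\nabla\vel$ by the $F,\omega,P-\tilde P$ route at the $L^4$ level, and keep the $\dot e$ term intact; then the argument closes.
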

where $\sum\limits_{k>1}$ is a finite sum over real indexes $k>1$.

\begin{proof}
Multiplying \eqref{momentoeqc} by $\dot u^j$, we have
\begin{align*}
\rho(\dot u^j)^2&=-P_{x_j}\dot u^j+\left(\mu\triangle
u^j+\lambda(\diver \vel)_{x_j} \right){u^j}_t\\&\qquad+\left(\mu\triangle
u^j+\lambda(\diver \vel)_{x_j} \right)\nabla u^j\cdot \vel
\end{align*}
and integrating by parts and summing on $j=1,2$, we obtain
\begin{equation}\label{lemamom}
\begin{array}{rl}
\int^t_0\int \rho|\dot{\vel}|^2d\x d\tau &+\frac
12\int(\mu|\nabla\vel|^2+\lambda(\diver \vel)^2)d\x|_0^t\\ &= \qquad +\int_0^t\int (P-\tilde
P)\diver \dot\vel d\x \\
&\quad+ \int_0^t\int \left(\mu\triangle u^j+\lambda(\diver
\vel)_{x_j} \right)\nabla u^j\cdot \vel d\x d\tau.
\end{array}
\end{equation}

%

The term above with the pressure can be written as
\begin{equation}\label{preswpont}
\begin{array}{rl}
\int_0^t\int(P-\tilde P)\diver \dot\vel d\x&=\int_0^t\int(P-\tilde P)\partial_t(\diver \vel)d\x d\tau\\&\qquad\qquad+\int_0^t\int(P-\tilde P)\diver \left((\nabla \vel)\vel\right) d\x d\tau\\
&=\left(\int(P-\tilde P)\diver \vel d\x\right)|_0^t\\
&\quad\quad+\int_0^t\int (P-\tilde P)\diver\left(\vel\diver \vel-(\nabla \vel)\vel\right)d\x d\tau\\
&\quad\quad+\int_0^t\int (P_\rho\rho(\diver\vel)^2- P_e\dot e\diver \vel)d\x d\tau\\
&=\int (P-\tilde P)\diver \vel d\x\\
&\quad\quad+\int_0^t\int  (P-\tilde P)\diver\left((\nabla \vel)\vel-\vel\diver \vel\right)d\x d\tau\\
&\quad\quad+\int_0^t\int (P_\rho\rho(\diver\vel)^2- P_e\dot e\diver \vel)d\x d\tau\\
\end{array}
\end{equation}
and, by the identity
\[
\diver\left((\nabla \vel)\vel-\vel\diver \vel\right)=(\diver \vel)^2-u_{x_j}^k{u}_{x_k}^j,
\]
we have that the modulus of the second term on the last expression above is bounded by
\[
\int_0^t\int|P-\tilde P||\nabla \vel|^2d\x.
\]
In addition, the fourth term in \eqref{preswpont} can be written as
\begin{align*}
\int_0^t\int P_\rho\rho(\diver\vel)^2 d\x
d\tau&=(\gamma-1)\int_0^t\int \rho e(\diver\vel)^2 d\x d\tau\\
&=\int_0^t\int P(\diver\vel)^2 d\x d\tau\\
&=\int_0^t\int (P-\tilde P)(\diver\vel)^2 d\x
d\tau\\
&\quad\quad+\int_0^t\int \tilde P(\diver\vel)^2 d\x d\tau.
\end{align*}

Then, since that the last integral in \eqref{lemamom} is easily bounded by $C\int\int |\nabla\vel|^3d\x d\tau$, we have that \eqref{lemamom} can be bounded as
\begin{equation}\label{momentopresao}
\begin{array}{rl}
\int^t_0\int \rho|\vel|^2d\x d\tau &+\frac
12\int \mu|\nabla\vel|^2+\lambda(\diver \vel)^2d\x \le C\dosnorma{\nabla\vel_0}+\int (P-\tilde
P)\diver \vel d\x \\
&\quad+ C\int_0^t\int  |\nabla \vel|^2d\x
d\tau+C\int_0^t\int  |\nabla \vel|^3d\x d\tau\\
&\quad +C\int_0^t\int  |P-\tilde P||\nabla \vel|^2d\x
d\tau-(\gamma-1)\int_0^t\int \rho\dot e\diver\vel d\x d\tau
\end{array}
\end{equation}
Next, let us estimate each integral on the right hand side in this inequality
The two first integrals in \eqref{momentopresao} are easily bounded using the energy estimates \eqref{energia} for $\vel$. The third integral is estimated by
\begin{align*}
C\int_0^t\int |\nabla \vel|^3d\x d\tau &\le
C\left(\int_0^t\int |\nabla \vel|^4d\x
d\tau\right)^{1/2}\left(\int_0^t\int|\nabla
\vel|^2d\x d\tau\right)^{1/2}\\
&\le CC_0^{1/2}\left(\int_0^t\int |\nabla \vel|^4d\x d\tau\right)^{1/2}\\
&=CC_0^{1/2}\left(\int_0^t \xnorma{\nabla\vel}{4}^4d\tau\right)^{1/2}\\
\end{align*}

Using that $\rho$ is bounded from above and below (see \eqref{regularidade}) and \eqref{energia}, we have
\begin{align*}
\int_0^t\int |P-\tilde P||\nabla \vel|^2d\x d\tau&\le
C\int_0^t\int |\nabla \vel|^2d\x d\tau\\
& \quad+C\int_0^t\int |e-\tilde e||\nabla \vel|^2d\x
d\tau\\
&\le CC_0+C\int_0^t \dosnorma{e-\tilde e}\xnorma{\nabla
\vel}{4}^2 d\tau\\
&\le CC_0+CC_0\int_0^t \xnorma{\nabla
\vel}{4}^2 d\tau\\
&\le CC_0+CC_0t^{1/2}\left(\int_0^t \xnorma{\nabla
\vel}{4}^4d\tau\right)^{1/2} 
\end{align*}


The terms in the two first integrals on the right hand side of \eqref{preswpont}are easily bounded using the energy estimates or can be combined with the left hand side. The last integral is bounded by $ CC_0^{1/2}B_1(t)^{1/2}$, by H\"older's inequality and the definition of $B_1$. Therefore, we just need to estimate the term of the $L^4$ norm:
\begin{align*}
&\int_0^t\int |\nabla \vel|^4d\x d\tau \le \int_0^t\int (F^4+|\omega|^4+|P-\tilde P|^4)d\x d\tau\\
\le & C\int_0^t\left(\int F^2d\x\right)\left(\int |\nabla F|^2d\x\right)+\left(\int |\omega|^2d\x\right)\left(\int |\nabla \omega|^2d\x\right)d\tau\\
&\qquad +(CC_0t+CC_0^2)\\
\le& C\int_0^t\left(\dosnorma{\nabla\vel}^2+\dosnorma{P-\tilde
P}^2\right)\dosnorma{\rho^{1/2}\dot \vel}^2d\tau\\
&\qquad +(CC_0t+CC_0^2)\\
\le& C(B_0(t)^2+C_0B_0+C_0+C_0^2).
\end{align*}

Finally, we have
\begin{align*}
B_0(t) \le  C\dosnorma{\nabla\vel_0}^2+ &CC_0+CC_0^{1/2}(B_0(t)^2+C_0B_0+C_0+C_0^2)^{1/2}\\
& +C(B_0(t)^2+C_0B_0+C_0+C_0^2)+CC_0^{1/2}B_1(t)^{1/2}\\
\qquad \le C(C_0^{\theta}+\sum\limits_{k>1}B(t)^{k})
\end{align*}
where the sum over $k$ is a finite sum and $\theta>0$ is a convenient constant.
\end{proof}

\begin{lema}
\[
B_1(t)\le
C\left[C_0^\theta+\sum\limits_{k>1}B(t)^k\right],
\]
where $\sum\limits_{k>1}$ is a finite sum over real indexes $k>1$.
\end{lema}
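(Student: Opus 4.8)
The plan is to run the same scheme as in the proof of Lemma~\ref{b0}, multiplying now the energy equation \eqref{energiaeqc} by $\dot e$ instead of the momentum equation by $\dot{\vel}$. First I would multiply \eqref{energiaeqc} by $\dot e$, integrate over $\R^2\times(0,t)$ and integrate by parts in the heat term, using $\dot e=e_t+\vel\cdot\nabla e$; the piece $K\int\triangle e\,e_t\,d\x=-\tfrac K2\tfrac{d}{d\tau}\int|\nabla e|^2\,d\x$ combines with $\int_0^t\!\int\rho|\dot e|^2$ to produce (a constant times) $B_1(t)$ on the left, plus the initial term $\tfrac K2\int|\nabla e_0|^2\,d\x\le CC_0$, while $K\int\triangle e\,(\vel\cdot\nabla e)\,d\x$ is, after one integration by parts, bounded in modulus by $C\int|\nabla\vel|\,|\nabla e|^2\,d\x$. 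The source terms $-P\diver\vel+\mu(|\nabla\vel|^2+u^k_{x_j}u^j_{x_k})+(\lambda-\mu)(\diver\vel)^2$ paired with $\dot e$ are controlled by $\int|P|\,|\nabla\vel|\,|\dot e|\,d\x+C\int|\nabla\vel|^2\,|\dot e|\,d\x$. Thus the whole estimate reduces to bounding, for $t\le1$,
\[
\int_0^t\!\!\int |\nabla\vel|\,|\nabla e|^2\,d\x\,d\tau,\qquad \int_0^t\!\!\int |P|\,|\nabla\vel|\,|\dot e|\,d\x\,d\tau,\qquad \int_0^t\!\!\int |\nabla\vel|^2\,|\dot e|\,d\x\,d\tau.
\]

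The last integral is the simplest: by Cauchy--Schwarz it is $\le\big(\int_0^t\xnorma{\nabla\vel}{4}^4\,d\tau\big)^{1/2}\big(\int_0^t\dosnorma{\dot e}^2\,d\tau\big)^{1/2}$; the first factor is controlled exactly as in the proof of Lemma~\ref{b0} by $C(B_0(t)^2+C_0B_0(t)+C_0+C_0^2)^{1/2}$ and the second by $(\underline\rho^{-1}B_1(t))^{1/2}$ in view of \eqref{regularidade}, so after expanding it has the form $C_0^\theta+\sum_{k>1}B(t)^k$ (any residual linear-in-$B$ contribution with a $C_0$-power coefficient being absorbed by writing $C_0^\alpha B\le C_0^\alpha+C_0^\alpha B^2$, legitimate since $C_0\le\epsilon$). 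For the pressure integral I would split $P=\tilde P+(P-\tilde P)$: since $|\tilde P|$ is a constant, the first part is $\le CC_0^{1/2}B_1(t)^{1/2}$ by \eqref{energia} and $\rho\ge\underline\rho$; for the second part I use the pointwise bound $|P-\tilde P|\le C(|e-\tilde e|+|\rho-\tilde\rho|)$ (valid because $\rho,e$ are bounded), H\"older's inequality with exponents $4,4,2$, and the interpolation bounds $\xnorma{(e-\tilde e)(\cdot,\tau)}{4}\le C\dosnorma{(e-\tilde e)(\cdot,\tau)}^{1/2}\dosnorma{\nabla e(\cdot,\tau)}^{1/2}\le CC_0^{1/4}B_1(t)^{1/4}$ (from \eqref{g-n} and \eqref{energia}) and $\xnorma{(\rho-\tilde\rho)(\cdot,\tau)}{4}\le C\dosnorma{\rho-\tilde\rho}^{1/2}\inorma{\rho-\tilde\rho}^{1/2}\le CC_0^{1/4}$; combining with the $L^4$-in-space, $L^4$-in-time bound of $\nabla\vel$ from Lemma~\ref{b0} and with $\int_0^t\dosnorma{\dot e}^2\le\underline\rho^{-1}B_1(t)$, this term is again of the form $C_0^\theta+\sum_{k>1}B(t)^k$.

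The integral $\int_0^t\!\int |\nabla\vel|\,|\nabla e|^2$ is the main obstacle, since $B_0,B_1$ carry no control of second derivatives of $e$. I would recover such control from the equation itself: rewriting \eqref{energiaeqc} as $K\triangle e=\rho\dot e+P\diver\vel-\mu(|\nabla\vel|^2+u^k_{x_j}u^j_{x_k})-(\lambda-\mu)(\diver\vel)^2$ and using the Calder\'on--Zygmund bound $\dosnorma{\nabla^2 e}\le C\dosnorma{\triangle e}$ in $\R^2$, one gets
\[
\dosnorma{\nabla^2 e}\le C\dosnorma{\dot e}+C\dosnorma{\nabla\vel}+C\xnorma{\nabla\vel}{4}^2+CC_0^{1/4}\big(1+B_1(t)^{1/4}\big)\xnorma{\nabla\vel}{4},
\]
the last term coming from $\dosnorma{(P-\tilde P)\diver\vel}\le\xnorma{P-\tilde P}{4}\xnorma{\diver\vel}{4}$ and the $L^4$ bounds on $e-\tilde e$, $\rho-\tilde\rho$ above. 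Then $\int_0^t\!\int|\nabla\vel|\,|\nabla e|^2\le\int_0^t\dosnorma{\nabla\vel}\,\xnorma{\nabla e}{4}^2\,d\tau$ and, by \eqref{g-n}, $\xnorma{\nabla e}{4}^2\le C\dosnorma{\nabla e}\dosnorma{\nabla^2 e}\le CB_1(t)^{1/2}\dosnorma{\nabla^2 e}$; inserting the previous bound, using $\dosnorma{\nabla\vel}\le B_0(t)^{1/2}$ and estimating the time integrals by H\"older (with $t\le1$) together with $\int_0^t\dosnorma{\dot e}^2\le\underline\rho^{-1}B_1(t)$ and the $L^4$-velocity bound of Lemma~\ref{b0}, one checks that every resulting term is either $C_0^\theta$, or a power $B(t)^k$ with $k>1$, or a $C_0$-power multiple of $B(t)$ (handled as before). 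Collecting all the estimates gives $B_1(t)\le C\big[C_0^\theta+\sum_{k>1}B(t)^k\big]$ with a finite sum over real exponents $k>1$, as claimed. I expect the bookkeeping of this last integral --- the recovery of $\dosnorma{\nabla^2 e}$ from the equation and the ensuing coupling between $B_0$ and $B_1$ (absent in the barotropic case of \cite{hoff-2dim}, where $P$ does not depend on $e$) --- to be the delicate point; it is exactly this coupling that forces $B_1$ to appear in Lemma~\ref{b0} and $B_0$ here, so that only the sum $B=B_0+B_1$ closes, via a continuity argument, in the proof of Theorem~\ref{teo3}.
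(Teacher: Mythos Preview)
Your proof is correct and follows essentially the same approach as the paper: multiply the energy equation by $\dot e$, integrate by parts in the heat term, and handle the critical cross term $\int_0^t\!\int|\nabla\vel|\,|\nabla e|^2$ by recovering $\dosnorma{\nabla^2 e}$ from the equation \eqref{energiaeqc} itself. The only differences are cosmetic: the paper first applies Young's inequality $|\nabla\vel|\,|\nabla e|^2\le C(|\nabla e|^{8/3}+|\nabla\vel|^4)$ and then interpolates $\xnorma{\nabla e}{8/3}^{8/3}\le C\dosnorma{\nabla e}^2\dosnorma{D^2 e}^{2/3}$, while you use H\"older $\int|\nabla\vel|\,|\nabla e|^2\le\dosnorma{\nabla\vel}\xnorma{\nabla e}{4}^2$ and then $\xnorma{\nabla e}{4}^2\le C\dosnorma{\nabla e}\dosnorma{\nabla^2 e}$; and the paper absorbs the $|\dot e|$ pieces of the source terms into the left-hand side via Young (reducing to $(e-\tilde e)^4+|\nabla\vel|^4$) rather than carrying them as factors bounded by $B_1(t)^{1/2}$ as you do. Both routes lead to the same closure.
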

\begin{proof}
Analogously to the proof of Lemma \ref{b0}, from \eqref{energiaeq} we have
\begin{align*}
B_1(t)&\le C\dosnorma{\nabla e_0}^2+C\int_0^t\int |\nabla
e|^2|\nabla \vel|d\x
d\tau\\
&\quad +C\int_0^t\int (e-\tilde e)^4+|\nabla
\vel|^4d\x d\tau\\
\end{align*}
As mentioned above,
\begin{align*}
\int_0^t\xnorma{\nabla\vel}{4}^4d\tau\le B_0(t)^2+CC_0B_0(t)+CC_0+CC_0^2.
\end{align*}
In addition,
\begin{align*}
\int_0^t\int (e-\tilde e)^4 d\x d\tau&\le \int_0^t\dosnorma{e-\tilde
e}^2\dosnorma{\nabla e}^2d\tau\\
\le CC_0.
\end{align*}
Then, it remains only to estimate the term $\int\int|\nabla e|^2|\nabla \vel|d\x d\tau$. Notice that
 \begin{align*}
 \int_0^t\int |\nabla e|^2|\nabla \vel| d\x d\tau&\le C\int_0^t\int
 |\nabla e|^{8/3}+|\nabla\vel|^4d\x d\tau
 \end{align*}
and, since,
\begin{align*}
&\int_0^t\int |\nabla e|^{8/3}d\x d\tau\le C\int_0^t
\dosnorma{\nabla e}^2 \dosnorma{D^2 e}^{2/3}\\
&\le \sup\limits_{0\le \tau \le t}\dosnorma{\nabla
e}^{2/3}\left(\int_0^t \dosnorma{\nabla
e}^2\right)^{2/3}\left(\int_0^t\int |D^2e|^2\right)^{1/3}\\
&\le CC_0^{2/3}B_1(t)^{2/3}\left(\int_0^t\int |D^2e|^2\right)^{1/3}\\
&\le CC_0^{2/3}B_1(t)^{2/3}\left(\int_0^t\int |\dot e|^2+|e|^2|\nabla\vel|^2+|\nabla \vel|^4\right)^{1/3}\\
&\le CC_0B_1(t)^{2/3}+CC_0^{2/3}B_1(t)^{2/3}\left(\int_0^t\int |\dot e|^2+|e-\tilde e|^2|\nabla\vel|^2+|\nabla \vel|^4\right)^{1/3}\\
&\le CC_0B_1(t)^{2/3}+CC_0^{2/3}B_1(t)^{2/3}\left(B_1(t)+B_0(t)^2+CC_0B_0(t)+CC_0+CC_0^2\right)^{1/3},
\end{align*}
choosing $\theta>0$ conveniently and applying the Young inequality, we can write
\[
B_1(t)\le
C\left[C_0^\theta+\sum\limits_{k>1}B(t)^k\right]
\]
\end{proof}

Combining the previous lemmas, we obtain the estimate 
\begin{equation}\label{stage1}
B(t)\le CC_0^{\theta},
\end{equation}
for any $0\le t\le1$.

We observe that by the above calculations,  we have also that the 4-norm $\int\int |\nabla\vel|^4d\x d\tau$ is bounded by $CC_0^\theta$. 

\bigskip

Next, we define a new function
\[
\begin{array}{rcl}
B_2(t)&=&\supintnormardsobconv{t}{\dot \vel}{\nabla\dot \vel}{},\\
\end{array}
\]
and show
\begin{lema}\label{conv2vellema}
\begin{align*}
B_2(t)\le CC_0^\theta
\end{align*}
\end{lema}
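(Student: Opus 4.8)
The plan is to perform a higher-order energy estimate obtained by applying the material derivative $(\;)^{\cdot}=\partial_t+\vel\cdot\nabla$ to the momentum equation and testing against $\dot\vel$, keeping the weight $\tau$ throughout. Starting from the representation \eqref{me Fw}, $\rho\dot u^j=F_{x_j}+\mu\omega^{j,k}_{x_k}$, and using \eqref{masaeqc} in the form $(\rho g)^{\cdot}=\rho\dot g-\rho(\diver\vel)g$, one gets
\[
\rho(\dot u^j)^{\cdot}=\rho(\diver\vel)\dot u^j+(F_{x_j})^{\cdot}+\mu(\omega^{j,k}_{x_k})^{\cdot}.
\]
Multiplying by $\dot u^j$, summing in $j$, integrating over $\R^2$, and using $\frac{d}{dt}\int\rho|\dot\vel|^2d\x=2\int\rho(\dot u^j)^{\cdot}\dot u^jd\x$ (again from \eqref{masaeqc}), together with the commutator identities $(F_{x_j})^{\cdot}=(\dot F)_{x_j}-u^k_{x_j}F_{x_k}$, $(\omega^{j,k}_{x_k})^{\cdot}=(\dot\omega^{j,k})_{x_k}-u^l_{x_k}\omega^{j,k}_{x_l}$, $\dot\omega^{j,k}=\dot u^j_{x_k}-\dot u^k_{x_j}-(u^l_{x_k}u^j_{x_l}-u^l_{x_j}u^k_{x_l})$, and $\dot F=(\mu+\lambda)\big(\diver\dot\vel-u^k_{x_j}u^j_{x_k}\big)-\dot P$ with $\dot P=-P\diver\vel+(\gamma-1)\rho\dot e$ (the last coming from \eqref{masaeqc}, \eqref{energiaeqc} and $P=(\gamma-1)\rho e$), the leading parts yield the favourable dissipative contribution $-\mu\int|\nabla\dot\vel|^2d\x-\lambda\int(\diver\dot\vel)^2d\x\le-\mu\int|\nabla\dot\vel|^2d\x$, and everything else is collected into a remainder $\mathcal R(t)$. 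This produces a differential inequality $\frac{d}{dt}\int\rho|\dot\vel|^2d\x+\mu\int|\nabla\dot\vel|^2d\x\le\mathcal R(t)$.

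Next I would multiply this inequality by $t$, use $\frac{d}{dt}\big(t\int\rho|\dot\vel|^2d\x\big)=\int\rho|\dot\vel|^2d\x+t\frac{d}{dt}\int\rho|\dot\vel|^2d\x$, and integrate over $[0,t]$ for $0\le t\le1$. There is no term from $\tau=0$ (this is precisely why the weight $\tau$ is needed: when $\vel_0\in H^1(\R^2)$ only, $\triangle\vel_0$ need not belong to $L^2$, so $\int\rho|\dot\vel|^2d\x$ is uncontrolled at the initial time), and $\int_0^t\int\rho|\dot\vel|^2d\x\,d\tau\le B_0(t)\le CC_0^\theta$ by \eqref{stage1}. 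Each term of $\int_0^t\tau\mathcal R(\tau)\,d\tau$ is then estimated by the two-dimensional interpolation inequality \eqref{g-n} (in the form $\xnorma{g}{4}^2\le C\dosnorma{g}\dosnorma{\nabla g}$), H\"older's inequality, the bounds $\underline\rho\le\rho\le\overline\rho$, the energy estimate \eqref{energia}, the already established bounds $B_0(t),B_1(t)\le CC_0^\theta$ and $\int_0^t\xnorma{\nabla\vel}{4}^4d\tau\le CC_0^\theta$ noted after \eqref{stage1}, the $L^\infty$ estimate \eqref{einfinito} for $e-\tilde e$ (hence for $P-\tilde P$), and the elliptic estimates \eqref{F w Lp} bounding $\dosnorma{\nabla F}$ and $\dosnorma{\nabla\omega}$ by $\dosnorma{\rho\dot\vel}$. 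Every $\varepsilon\int\tau|\nabla\dot\vel|^2d\x$ produced by Young's inequality is absorbed into the left-hand side; the coupling term $(\gamma-1)\int\rho\dot e\,\diver\dot\vel\,d\x$ is split as $\varepsilon\int|\nabla\dot\vel|^2d\x+C\int\rho|\dot e|^2d\x$, whose weighted time integral is controlled by $B_1(t)\le CC_0^\theta$ thanks to \eqref{conv1ener}; and the term $\int\rho(\diver\vel)|\dot\vel|^2d\x$ gives, after the same manipulations, a contribution $\le CC_0^\theta B_2(t)$, using $\sup_\tau\dosnorma{\nabla\vel(\cdot,\tau)}^2\le CC_0^\theta$.

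Collecting everything gives $B_2(t)\le CC_0^\theta+CC_0^\theta B_2(t)$ for $0\le t\le1$, whence $B_2(t)\le CC_0^\theta$ once $\epsilon$ (and thus $C_0$) is small enough; the $\sup$ part of $B_2$ then follows by bounding, for each fixed $t\le1$, the quantity $t\int\rho|\dot\vel(\cdot,t)|^2d\x$ by the right-hand side at time $t$, which is nondecreasing in $t$ (alternatively one closes the argument by a Gronwall inequality with integrating factor $\int_0^t\dosnorma{\nabla\vel}^2d\tau\le CC_0^\theta$). I expect the main obstacle to be the pressure-generated part of $\mathcal R$: since $P=(\gamma-1)\rho e$ depends on the energy, $\mathcal R$ contains $\dot e$ and $\nabla e$ (through $\dot P$ and $\nabla F$), so the estimate genuinely requires the energy bound \eqref{conv1ener} --- this is exactly where the non-barotropic case departs from \cite{hoff-2dim} and where the mutual coupling of \eqref{conv1vel}--\eqref{conv1ener} mentioned in the Introduction is used. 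A secondary technical point is to verify that the negative power of $\tau$ produced by \eqref{einfinito} in the term $\int P(\diver\vel)(\diver\dot\vel)\,d\x$ remains integrable against the weight $\tau$, which constrains how the Young splittings are organised.
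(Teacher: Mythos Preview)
Your proposal is correct and follows essentially the same strategy as the paper's proof. The only differences are bookkeeping choices: the paper applies the operator $\tau\dot u^j\bigl(\partial_t+\diver(\cdot\,\vel)\bigr)$ directly to the momentum equation \eqref{momentoeqc} (rather than taking the pure material derivative of the $(F,\omega)$-representation, which is why you pick up the extra term $\rho(\diver\vel)|\dot\vel|^2$ and need the small-$C_0$ self-absorption step), and it controls the pressure remainders through the $L^4$ bound $\int_0^t\xnorma{P-\tilde P}{4}^4\,d\tau\le CC_0^\theta$ instead of invoking \eqref{einfinito}, so the integrability concern you flag at the end never actually arises.
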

\begin{proof}
We begin by applying the operator $\tau\dot u^j(\partial_t+\diver(\cdot
\vel))$ to the moment equation \eqref{momentoeqc} to get
\begin{align*}
\frac{1}{2}\rho\frac{\partial}{\partial \tau}\left(\tau(\dot
u^j)^2\right)+\frac{1}{2}\tau\rho\vel\cdot\nabla (\dot u^j)^2&=\frac{1}{2}\rho(\dot
u^j)^2-\tau\dot u^j\left(P_{tx_j}+\diver{P_{x_j}\vel}\right)\\
&\quad+\mu\tau\dot u^j\left(\triangle u^j_t+\diver{\triangle
u^j\vel}\right)\\
&\quad+ \lambda\tau\dot
u^j\left((\diver\vel)_{tx_j}+\diver{(\diver\vel)_{x_j} \vel}\right).
\end{align*}
Calculating the integral in the variable $x$ and $t$ and using \eqref{masaeqc}, we have
\begin{align*}
\frac{1}{2}\int\rho\tau|\dot
u^j|^2&=\frac{1}{2}\int_0^t\int\rho|\dot
u^j|^2d\x d\tau-\int_0^t\int\tau\dot u^j\left(P_{tx_j}+\diver{P_{x_j}\vel}\right)d\x d\tau\\
&\quad+\mu\int_0^t\int\tau\dot u^j\left(\triangle
u^j_t+\diver{\triangle
u^j\vel}\right)d\x d\tau\\
&\quad+ \lambda\int_0^t\int\tau\dot
u^j\left((\diver\vel)_{tx_j}+\diver{(\diver\vel)_{x_j}
\vel}\right)d\x d\tau
\end{align*}
Then, summing in $j$ we get four integrals, which we estimate as follows. The first integral is bounded $CC_0^\theta$ by \eqref{stage1}. As for the second, using
\eqref{masaeq}, we have
\begin{align*}
&\int_0^t\int\tau\dot u^j\left(P_{tx_j}+\diver{P_{x_j}\vel}\right)d\x
d\tau\\
=&-\int_0^t\int\tau\diver \dot \vel P_td\x
d\tau-\int_0^t\int\tau P_{x_j}\dot u^j_{x_k}u^kd\x d\tau\\
=&-\int_0^t\int\tau\diver \dot \vel (P_\rho\rho_t+P_ee_t)d\x d\tau -\int_0^t\int\tau P(\dot u^j_{x_kx_j}u^k+\dot u^j_{x_k}u^k_{x_j})d\x d\tau\\
=&\int_0^t\int\tau P\diver \dot \vel\diver\vel d\x d\tau-\int_0^t\int\tau\diver \dot \vel(\vel\cdot\nabla P)d\x d\tau\\
&\quad -(\gamma-1)\int_0^t\int\tau \rho\dot e\diver \dot\vel d\x d\tau-\int_0^t\int\tau P(\dot u^j_{x_kx_j}u^k+\dot u^j_{x_k}u^k_{x_j})d\x d\tau\\
=&2\int_0^t\int\tau P\diver \dot \vel\diver\vel d\x d\tau-\int_0^t\int\tau P \dot u^j_{x_k}u^k_{x_j} d\x d\tau\\
&\quad -(\gamma-1)\int_0^t\int\tau\rho\diver\dot\vel\dot e d\x d\tau
\end{align*}

In the case of the viscosity terms, we can get
\begin{align*}
\int_0^t\int \tau\dot u^j&\left(\triangle u^j_t+\diver
(\triangle u^j\vel)\right)d\x d\tau=-\int_0^t\int
\tau|\nabla\dot\vel|^2d\x d\tau\\
&\quad +O\left(\int\int\tau|\nabla\dot\vel||\nabla\vel|^2d\x
d\tau\right).
\end{align*}
Similary, 
\begin{align*}
\int_0^t\int \tau\dot u^j&\left((\diver \vel)_{tx_j}+\diver
((\diver\vel)_{x_j}\vel)\right)d\x d\tau=-\int_0^t\int
\tau(\diver\dot\vel)^2d\x d\tau\\
&\quad +O\left(\int\int\tau|\nabla\dot\vel||\nabla\vel|^2d\x.
d\tau\right)
\end{align*}
With all this, we manage to conclude that
\begin{align*}
\tau&\int\rho|\dot \vel|^2d\x+\int_0^1\int \tau
|\nabla\dot\vel|^2d\x
d\tau\le CC_0^\theta\\
&\quad +C\int_0^t\int\tau|P-\tilde P||\nabla
\dot\vel|\nabla\vel|d\x d\tau+C\int_0^t\int\tau|\nabla
\dot\vel|\nabla\vel|^2d\x d\tau\\
&\quad +C\int_0^t\int\tau|\nabla\dot{\vel}||\dot e|d\x d\tau
\end{align*}
Terms with $\nabla\dot\vel$ can be absorved on the left side, the $L^2$ norm of $\dot e$ for the last integral above was estimated in \eqref{stage1} and the others are estimated as follows: first of all we have that $P-\tilde P=(\gamma-1)(\rho(e-\tilde e)+\tilde e(\rho-\tilde{\rho}))$, so that, by interpolation inequality and the energy estimates,
\begin{align*}
\xnorma{P-\tilde P}{4}&\le C(\xnorma{\rho-\tilde \rho}{4}+\xnorma{e-\tilde e}{4})\\
&\le C(\xnorma{\rho-\tilde \rho}{\infty}\dosnorma{\rho-\tilde \rho}+\dosnorma{e-\tilde e}^2\dosnorma{\nabla e}^2).
\end{align*}
So, we get
\begin{align*}
\int_0^t\xnorma{P-\tilde P}{4}^4\le CC_0^\theta.
\end{align*}
Using this, along with the observation before the definition of $B_2$, the energy estimates form \cite{hoff-heatconducting} and \eqref{stage1}, we obtain the desired result, i.e. 
\begin{align*}
B_2(t)\le CC_0^\theta.
\end{align*}
\end{proof}

\section{Lagrangian structure}
\label{ls}


In this section we show Theorem \ref{lagrangiana}.
We recall that all the previous estimates were obtained uniformly with respect to the approximate solutions. 


We shall perform the decomposition $\veld=\vel_P^\delta+\veld_{F,\omega}$ for the approximate solution $(\veld,\rho^\delta,e^\delta)$ and shall write $\vel_P^\delta=\vel_{P^\delta}^\delta$, for simplicity.

\begin{proof}[Proof of Theorem \ref{lagrangiana}]
The integral curve for the approximate field $\vel^\delta$ starting at $\x_0\in\mathbb{R}^2$ is given by
\begin{equation}\label{trajetoriadelta}
X^\delta(\x_0,t)=\x_0+\int_0^t\vel^\delta(X^\delta(\x_0,\tau),\tau)d\tau,\quad
t\ge0.
\end{equation}
The map $t\mapsto X^\delta(t,\x_0)$ is H\"older continuous, uniformly with respect to $\delta$. Indeed, for any $0\le t_1<t_2$,  by \eqref{infty2p} we have
\begin{align*}
|X^\delta(\x_0,t_1)-X^\delta(\x_0,t_2)|&\le
\int_{t_2}^{t_1}\inorma{\vel^\delta(\cdot,t)}d\tau\\
&\le
C\int_{t_2}^{t_1}\dosnorma{\vel^\delta(\cdot,\tau)}+\pnorma{\nabla\vel^\delta(\cdot,\tau)}d\tau
\end{align*}
and we can bound the $L^2$ norm above by $CC_0^\theta(t_2-t_1)^\gamma$, for some $\gamma\in (0,1)$(independent of $\delta$) exactly as in \cite[(3.9)]{hoff-heatconducting}. Using \eqref{nabla u Lp} and \eqref{infty2p}, the $L^p$ norm above can be bound as
\begin{align*}
\pnorma{\nabla\vel^\delta}&\le
\pnorma{F^\delta}+\pnorma{\omega^\delta}+\pnorma{P^\delta-\tilde P}\\
&\le CC_0^\theta \left(1+\int|\nabla\vel^\delta|^2d\x\right)^{(1-\eta)/2}\left(\int\rho^\delta|\dot\vel^\delta|^2d\x\right)^{\eta/2}\\
&\qquad+C\pnorma{\rho^\delta-\tilde\rho}+C\pnorma{e^\delta-\tilde e}\\
&\le CC_0^\theta \left(1+\int|\nabla\vel^\delta|^2d\x\right)^{(1-\eta)/2}\left(\int\rho^\delta|\dot\vel^\delta|^2d\x\right)^{\eta/2}\\
&\qquad+C\dosnorma{\rho^\delta-\tilde\rho}+C\dosnorma{e^\delta-\tilde e}^{1-\eta}\dosnorma{\nabla e^\delta}^\eta\\
&\le CC_0^\theta \left(1+\int|\nabla\vel^\delta|^2d\x\right)^{(1-\eta)/2}\left(\int\rho^\delta|\dot\vel^\delta|^2d\x\right)^{\eta/2}\\
&\qquad+CC_0^\theta(1+\dosnorma{\nabla e^\delta}^\eta).
\end{align*}
where $\eta=(p-2)/p$. Then
\begin{align*}
&\int_{t_1}^{t_2}\pnorma{\nabla\vel^\delta(\cdot,\tau)}d\tau\\ 
&\le CC_0^\theta\int_{t_1}^{t_2} \left(1+\int|\nabla\vel^\delta|^2d\x\right)^{(1-\eta)/2}\left(\int\rho^\delta|\dot\vel^\delta|^2d\x\right)^{\eta/2}d\tau\\
&\qquad+CC_0^\theta\int_{t_1}^{t_2}(1+\dosnorma{\nabla e^\delta}^\eta)\\
&\le CC_0^\theta[(t_2-t_1)+(t_2-t_1)^{\gamma_1}],
\end{align*}
for some constant $\gamma_1\in (0,1)$ (independent of $\delta$). The above bounds show that $X^\delta(\x_0,\cdot)$ is H\"older continuous with respect to $\delta$, which guarantees the existence of a sequence $X^{\delta_n}(\x_0,\cdot)$ converging (as $\delta_n\to0$) to a H\"older continuous map $X(\x_0,\cdot)$, uniformly in compacts in $[0,\infty)$. Then passing to the limit in \eqref{trajetoriadelta} we get \eqref{trajetoria}.
\end{proof}


To prove the uniqueness of the map $X(\cdot,\x_0)$ satisfying \eqref{trajetoria}, we restrict $t$ close to zero, since the uniqueness of a trajectory starting in $t>0$ is
considerably simpler. We omit some details facilitating the exposure, which can be seen in
\cite{hoff-santos}. Let $X_1(\y_1,\cdot)$ and $X_2(\y_2,\cdot)$ two integral curves starting at $\y_1$ and $\y_2$, respectively, when $t=0$, i.e. $X_1,X_2$ satisfy \eqref{trajetoria} with $X$ replaced by $X_1,X_2$ and $\x_0$ by $\y_1,\y_2$, respectively. Following \cite{hoff-santos}, we introduce the function
\[
g(t):=\dfrac{|\vel(X_2(t,\y_2),t)-\vel(X_1(t,\y_1),t)|}{m(|X_2(t,\y_2)-X_2(t,\y_2)|)}
\]
and prove first the following lemma:
\begin{lema}\label{gint} There is a constant $C$, independent of $\y_2\in \R^2$ such that
\begin{equation}
\label{g}
\int_0^1g(\tau)d\tau \le C.
\end{equation}
\end{lema}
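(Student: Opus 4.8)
The plan is to follow the decomposition $\vel = \vel_P + \vel_{F,\omega}$ (at the level of the approximate solutions, then pass to the limit) and to estimate $g$ by the sum of the contributions coming from each piece. Since $m$ is concave and $m(0)=0$, for any vector field $\w$ one has the elementary bound
\[
\frac{|\w(\x)-\w(\y)|}{m(|\x-\y|)} \le \llseminorma{\w} \quad\text{if } 0<|\x-\y|\le 1, \qquad \frac{|\w(\x)-\w(\y)|}{m(|\x-\y|)}\le \|\nabla\w\|_{L^\infty}\quad\text{(Lipschitz case, using }m(r)\ge r\text{)},
\]
so the first step is to write, with $\x_i(t):=X_i(t,\y_i)$,
\[
g(t) \le \frac{|\vel_P(\x_2(t),t)-\vel_P(\x_1(t),t)|}{m(|\x_2(t)-\x_1(t)|)} + \frac{|\vel_{F,\omega}(\x_2(t),t)-\vel_{F,\omega}(\x_1(t),t)|}{m(|\x_2(t)-\x_1(t)|)} \le \llseminorma{\vel_P(\cdot,t)} + C\,\|\nabla\vel_{F,\omega}(\cdot,t)\|_{L^\infty(\R^2)},
\]
at least while $|\x_2(t)-\x_1(t)|\le 1$ (the case $|\x_2(t)-\x_1(t)|>1$ is harmless since then $m$ is linear and both $\vel_P$ and $\vel_{F,\omega}$ are globally bounded by the energy estimates and \eqref{uP LL}). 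Hence it suffices to show that $t\mapsto\llseminorma{\vel_P(\cdot,t)}$ and $t\mapsto\|\nabla\vel_{F,\omega}(\cdot,t)\|_{L^\infty(\R^2)}$ are both integrable on $[0,1]$, uniformly in $\delta$.

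For the pressure part, I would invoke \eqref{uP LL}: $\llnorma{\vel_P(\cdot,t)} \le C(\xnorma{(P-\tilde P)(\cdot,t)}{p}+\inorma{(P-\tilde P)(\cdot,t)})$ for a fixed $p\in[2/(1+l),2)$. The $L^p$ term is bounded in $L^\infty_t(0,1)$ by \eqref{energia} together with $\underline\rho\le\rho\le\overline\rho$, and the $L^\infty$ term is integrable on $[0,1]$ by the Lemma quoted from \cite[Lemma 4.4]{hoff-heatconducting}, namely \eqref{integrale}, combined again with the bounds on $\rho$ and the identity $P-\tilde P = (\gamma-1)(\rho(e-\tilde e)+\tilde e(\rho-\tilde\rho))$. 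This gives $\int_0^1\llseminorma{\vel_P(\cdot,\tau)}\,d\tau \le C$ with $C$ independent of $\delta$ and of $\y_2$.

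For the divergence–vorticity part, the key is the Morrey-type estimate \eqref{infty2p} applied to $\nabla\vel_{F,\omega}$: choosing $p>2$,
\[
\|\nabla\vel_{F,\omega}(\cdot,t)\|_{L^\infty} \le C\big(\xnorma{D^2\vel_{F,\omega}(\cdot,t)}{p} + \dosnorma{\nabla\vel_{F,\omega}(\cdot,t)}\big) \le C\big(\xnorma{\rho\dot\vel(\cdot,t)}{p} + \dosnorma{\nabla\vel(\cdot,t)}\big),
\]
using \eqref{d2 ufw} and $\dosnorma{\nabla\vel_{F,\omega}}\le C\dosnorma{\nabla\vel}+C\dosnorma{P-\tilde P}$. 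Then I interpolate $\xnorma{\rho\dot\vel}{p}$ between $L^2$ and, via \eqref{g-n}, its gradient in $L^2$, writing $\xnorma{\dot\vel}{p}^p \le C\dosnorma{\dot\vel}^2\dosnorma{\nabla\dot\vel}^{p-2}$; using $\rho$ bounded this yields $\xnorma{\rho\dot\vel}{p} \le C\dosnorma{\dot\vel}^{2/p}\dosnorma{\nabla\dot\vel}^{(p-2)/p}$. Feeding in the weights $t$ from $B_2$ — i.e. $\dosnorma{\dot\vel(\cdot,t)}^2\le CC_0^\theta t^{-1}$ and $\int_0^1 t\dosnorma{\nabla\dot\vel}^2\,dt\le CC_0^\theta$ from Lemma \ref{conv2vellema} — together with $\int_0^1\dosnorma{\dot\vel}^2\,dt\le CC_0^\theta$ and $\sup_{0<t<1}\dosnorma{\nabla\vel}^2\le CC_0^\theta$ from \eqref{conv1vel}, \eqref{conv2vel}, a Hölder-in-time argument shows $\int_0^1\|\nabla\vel_{F,\omega}(\cdot,\tau)\|_{L^\infty}\,d\tau \le CC_0^\theta$, again uniformly in $\delta$. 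Passing to the limit along the subsequence $\delta_n\to0$ (using that $X^{\delta_n}\to X$ uniformly on compacts and the lower semicontinuity / Fatou for the time integrals) gives \eqref{g}.

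The main obstacle I expect is the time-integrability of $\|\nabla\vel_{F,\omega}(\cdot,t)\|_{L^\infty}$ near $t=0$: one must choose the exponent $p>2$ (equivalently $\eta=(p-2)/p$) small enough that, after interpolation and insertion of the singular weights $t^{-1}$ coming from $B_2$, the resulting power of $t$ is still integrable on $[0,1]$; this is precisely where the improved estimates \eqref{conv1vel}–\eqref{conv1ener} and $B_2(t)\le CC_0^\theta$ (valid because $\vel_0\in H^1$) are essential, whereas the weaker bounds \eqref{conv1velsinsob}, \eqref{conv2velsinsob} of \cite{hoff-heatconducting} would not suffice. A secondary technical point is to justify that all constants are independent of the base point $\y_2$ — which is automatic here since the bounds on $\llseminorma{\vel_P}$ and $\|\nabla\vel_{F,\omega}\|_{L^\infty}$ are global in space and do not see the trajectories at all — and to handle the regime $|\x_2(t)-\x_1(t)|>1$ separately using only $\inorma{\vel(\cdot,t)}\le CC_0^\theta$ and $m(r)=r$ there.
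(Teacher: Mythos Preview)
Your proposal is correct and follows essentially the same route as the paper: reduction to the approximate solutions via Fatou, the decomposition $\vel^\delta=\vel_P^\delta+\vel_{F,\omega}^\delta$, the bound $g^\delta\le\llseminorma{\vel_P^\delta}+\llseminorma{\vel_{F,\omega}^\delta}$, control of the pressure piece by \eqref{uP LL}, \eqref{energia}, \eqref{integrale}, and control of the $\vel_{F,\omega}$ piece by \eqref{infty2p}, \eqref{d2 ufw}, the interpolation \eqref{g-n}, and the improved estimates \eqref{conv1vel}--\eqref{conv2vel} with $\eta=(p-2)/p$ chosen small. Your additional remarks on the regime $|\x_2(t)-\x_1(t)|>1$ and on independence from $\y_2$ are not spelled out in the paper but are consistent with its argument.
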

\begin{proof}
By Fatou's lemma, it is enough to prove \eqref{g} with $\vel^\delta$ in place of $\vel$ and with
associated integral curves $X_1^\delta, X_2^\delta$ in place of $X_1,X_2$. Recalling the decomposition $\vel^\delta=\vel_{F,\omega}^\delta+\vel_P^\delta$, we observe that $g^\delta(t)\le
\llseminorma{\vel_P^\delta}+\llseminorma{\vel_{F,\omega}^\delta}$. By \eqref{uP LL}, \eqref{energia} and \eqref{integrale}, we have 
\begin{equation}\label{llintegrabilidad}
\int_0^1\llseminorma{\vel_P^\delta}dt 
\le C.
\end{equation}

As for $\llseminorma{\vel_{F,\omega}^\delta}$, we can estimate $\inorma{\nabla\vel^\delta_{F,\omega}}$ using \eqref{infty2p} and then \eqref{d2 ufw}. Thus, since the $LL$-semi norm is bounded by the Lipschitzian semi norm, we obtain
$$
\llseminorma{\vel_{F,\omega}^\delta} \le C(\dosnorma{\nabla\vel^\delta_{F,\omega}}+\pnorma{\dot{\vel}^\delta}).
$$
Now, 
$$
\dosnorma{\nabla\vel^\delta_{F,\omega}}\le C(\dosnorma{F^\delta}+\dosnorma{\omega^\delta})\le C(\dosnorma{\nabla\vel^\delta}+\dosnorma{P^\delta-\tilde P}),
$$
and $\dosnorma{P^\delta-\tilde P}\le CC_0^\theta$, then, using \eqref{g-n} it follows that
$$
\llseminorma{\vel_{F,\omega}^\delta} 
\le C(C_0^\theta+\dosnorma{\nabla\vel^\delta}+\dosnorma{\dot\vel^\delta}^{1-\eta}\dosnorma{\nabla\dot\vel^\delta}^{\eta})
$$
where $\eta=\frac{p-2}{p}$. At this point the estimates \eqref{conv1vel}, \eqref{conv2vel} are crucial to obtain that
\begin{align*}
&\int_0^{1}\dosnorma{\rho^\delta\dot\vel^\delta}^{1-\eta}\dosnorma{\nabla\dot\vel^\delta}^{\eta}dt\\
\le & C(\int_0^1t^{-\eta}dt)^{1/2}(\int_0^1\int\rho^\delta|\dot\vel^\delta|^2dt)^{1-\eta}
(\int_0^{t}\int t|\nabla\dot\vel^\delta|^2dt)^{\eta}<\infty,
\end{align*}
since we can choose $p>2$ sufficiently close to $2$. Therefore, this and the above estimates, together with the energy estimate, show that 
$$
\int_0^1\llseminorma{\vel_{F,\omega}^\delta}dt<\infty.
$$
\end{proof}

\medskip

Returning to the integral curves $X_1$ e $X_2$,  we have that
\[
|X_2(t,\y_2)-X_1(t,\y_1)|\le|\y_2-\y_1|+\int_0^t
g(\tau)m(|X_2(t,\y_2)-X_1(t,\y_1)|)d\tau
\]
so, by Osgood's Lemma (\cite{chemin}, \cite{flett}) and by Lemma \ref{gint}, we obtain that
\begin{equation}\label{osgood}
|X_2(t,\y_2)-X_1(t,\y_1)|\le\exp(1-e^{-\int_0^tgd\tau})|\y_2-\y_1|^{\exp(-\int_0^tgd\tau)},
\end{equation}
which, in particular, implies $X_1=X_2$ if $\y_1=\y_2$.

\medskip

Next, we show the second claim of Theorem \ref{lagrangiana}. First, we show that for fixed $t>0$, the map $X(t,\cdot):\x\mapsto X(t,\x)$ is injective. Suppose that $X(t,\y_1)=X(t,\y_2)$ for some $\y_1,\y_2\in\mathbb{R}^2$. For $t'\in(0,t)$, writing
\begin{align*}
X(t',\y_1)-X(t',\y_2)&=X(t',\y_1)-X(t,\y_1)+X(t,\y_2)-X(t',\y_2)\\
&=\int_{t'}^t \vel(X(\tau,\y_2),\tau)-\vel(X(\tau,\y_1),\tau)d\tau,
\end{align*}
we have, as above, that
\[
|X(t',\y_1)-X(t',\y_2)|\le \int_{t'}^t
g(\tau)m(X(\tau,\y_1)-X(\tau,\y_2))d\tau.
\]
Therefore, $X(t',\y_1)=X(t',\y_2)$ for all 
$0<t'\le t$. Then, by the continuity of the maps $X(\cdot,\y_1), X(\cdot,\y_2)$, it follows that $\y_1=X(0,\y_1)=X(0,\y_2)=\y_2$.

To prove that the map $\x\mapsto X(t,\x)$ is onto, we use the uniqueness of the particle paths. Indeed, for $\y\in\R^2$,  there is a curve $Y(s)=X(s;\y,t)$ with
$Y(t)=\y$, $s\in[0,t]$. The curves $Y(s)$ and $X(s,Y(0))$ satisfy the problem
\[
\begin{cases}
&\frac{d}{ds}Z(s)=\vel(Z(s),s)\\
&Z(0)=Y(0)
\end{cases}
\]
thus, $Y(s)=X(s,Y(0))$ for all $s\in [0,t]$. In particular, $\y=X(t,Y(0))$.

To conclude the proof of the claim 2 of Theorem \ref{lagrangiana}, it remains to show that the map $\x\mapsto X(t,\x)$ is open. Let  $A$ be an open set in $\R^2$. We fix
a $\z_1=X(t,\y_1)$ with $\y_1\in A$. We know that for any $\z\in\R^2$, there exists a curve $Y(s)=X(s;\x,t)$ defined for $s\in[0,t]$ and such that $Y(t)=\z$. In fact,
\begin{equation}\label{ycurva}
Y(s)=\z+\int_s^t\vel(X(\tau;\z,t),\tau)d\tau=\z+\int_s^t\vel(Y(\tau),\tau)d\tau
\end{equation}
As done before, we can see that
\[
\int_0^{t'}\inorma{\vel(\cdot,\tau)}d\tau\le CC_0^\theta
t'^{\gamma_1}
\]
and fixing $r>0$ such that $B_r(\y_1)\subset A$ we can chose
$t'\le [(r-r_1)/(2CC_0^\theta)]^{1/\gamma_1}$ with $0<r_1<r$ to obtain
\begin{equation}\label{uinfty}
\int_0^{t'}\inorma{\vel(\cdot,\tau)}d\tau\le \frac{r-r_1}{2}.
\end{equation}
On the other hand, since
\[
\frac{d}{d\tau}X(\tau,\y_1)=\vel(X(\tau,\y_1),\tau);\qquad
X(t,\y_1)=\z_1
\]
we have that
\begin{equation}\label{trajetoriat}
X(s,\y_1)=z_1-\int_s^t \vel(X(\tau,\y_1),\tau)d\tau
\end{equation}
and so,  substracting \eqref{ycurva} of \eqref{trajetoriat}
\begin{align*}
|X(s,\y_1)-Y(s)|&\le|\z_1-\z|+\int_s^t
|\vel(Y(\tau),\tau)-\vel(X(\tau,\y_1),\tau)|d\tau\\
&\le |\z_1-\z|+\int_s^t g(\tau)|m(Y(\tau)-X(\tau,\y_1))|d\tau
\end{align*}
and using again the Osgood's inequality
\[
|X(s,\y_1)-Y(s)|\le
\exp\left(1-e^{-\int_s^tg(\tau)d\tau}\right)|\z_1-\z|^{e^{\int_s^tg(\tau)d\tau}}
\]
for all $s\in[t',t)$. If we chose $a\le\left(
r_1\exp\left(1-e^{-\int_{t'}^tg(\tau)d\tau}\right)\right)^{1/\gamma_t}$
where $\gamma_t$ is a suitable constant  depending possibly
of $t$, we get
\[
|X(t',\y_1)-Y(t')|\le r_1.
\]
This, together with \eqref{uinfty} shows that
\[
|\y_1-Y(0)|\le r,
\]
which assures that $Y(0)\in B_r(\y_1)$ when $\z\in B_a(\z_1)$.
Finally, by the the uniqueness of the trajectories, we have that
$\z=Y(t)=X(t,Y(0))$, so $B_a(\z_1)\subset X(t,A)$, as
wanted.

\medskip

\noindent
{\em Proof of claim \ref{holdercompactos} of Theorem \ref{lagrangiana}}: \ The proof of \ref{holdercompactos} is also an application of the Osgood's inequality. Let $\y_1,\y_2\in \R^2$ and consider the
function
\[
\tilde
g(\tau)=\frac{|\vel(X(\tau,\y_2),\tau)-\vel(X(\tau,\y_1),\tau)|}{m(|X(\tau,\y_2)-X(\tau,\y_1)|)}
\]
As in Lemma  \ref{gint}, we can show that
\[
\int_0^t \tilde g(\tau)d\tau\le Ct^\gamma.
\]
On the other hand, for $t\in[t_1,t_2]$, we can write
\[
X(t,\y_2)-X(t,\y_1)=X(t_1,\y_2)-X(t_1,\y_1)+\int_{t_1}^{t}\vel(X(\tau,\y_2),\tau)-\vel(X(\tau,\y_1),\tau)d\tau.
\]
Thus, 
\begin{align*}
|X(t,\y_2)-X(t,\y_1)|&\le
|X(t_1,\y_2)-X(t_1,\y_1)|+\int_{t_1}^{t}\tilde
g(\tau)m(|X(\tau,\y_2)-X(\tau,\y_1)|)d\tau.
\end{align*}
Then
\begin{equation}\label{holdertrajetoria}
\begin{array}{rl}
|X(t,\y_2)-X(t,\y_1)|&\le \exp{\left(1-e^{-\int_{t_1}^t \tilde
g(\tau)d\tau}\right)}|X(t_1,\y_2)-X(t_1,\y_1)|^{e^{-\int_{t_1}^{t}\tilde
g(\tau)d\tau}}\\
&\le C(t)|X(t_1,\y_2)-X(t_1,\y_1)|^{e^{-Lt^\gamma}},
\end{array}
\end{equation}
for all $t\in [t_1,t_2]$. In particular, for $t=t_2$, as desired.

\medskip

\noindent
{\em Proof of claim \ref{curves H cont} of Theorem \ref{lagrangiana}}: \ Let the curve $\mathcal C$ be parametrized by a H\"older continuous $\varphi$ with exponent $\alpha$.
Defining $\psi_t(s)=X(t,\varphi(s))$ and using \eqref{holdertrajetoria}, we obtain
\[
|\psi_t(s_2)-\psi_t(s_1)|\le C(t)|\varphi(s_2)-\varphi(s_1)|^{e^{-L
t^\gamma}}\le C(t)|s_2-s_1|^{\alpha e^{-L t^\gamma}}.
\]
This proves the claim \ref{curves H cont}.



\bibliographystyle{plain}
\bibliography{bibfluidos}

\end{document}